\documentclass[10pt]{article} 

\usepackage[colorlinks=true, allcolors=blue]{hyperref}

\usepackage[utf8]{inputenc} 

\usepackage{geometry} 
\usepackage{graphicx} 

\usepackage[round]{natbib}

\usepackage{booktabs} 
\usepackage{array} 
\usepackage{paralist} 
\usepackage{verbatim} 
\usepackage{subfig} 

\usepackage{amsmath}
\usepackage{amsfonts}
\usepackage{amssymb}
\usepackage{dsfont}
\usepackage{amsthm}

\usepackage{multirow}

\allowdisplaybreaks

\theoremstyle{theorem}

\newtheorem{thm}{Theorem}
\newtheorem{lem}{Lemma}
\newtheorem{cor}{Corollary}
\theoremstyle{definition}
\newtheorem{defn}{Definition}

\newcommand{\RRR}{\mathbb{R}}

\newcommand{\EEE}{\mathbb{E}}
\newcommand{\PPP}{\mathbb{P}}
\newcommand{\dd}{\mathrm{d}}
\newcommand{\one}{\mathds{1}}

\newcommand{\twCRPS}{\mathrm{twCRPS}}
\newcommand{\CRPS}{\mathrm{CRPS}}
\newcommand{\survCRPS}{\mathrm{survCRPS}}
\newcommand{\LogS}{\mathrm{LogS}}
\newcommand{\twLogS}{\mathrm{twLogS}}

\newcommand{\QL}{\mathrm{QL}}
\newcommand{\IS}{\mathrm{IS}}
\newcommand{\LinS}{\mathrm{LinS}}
\newcommand{\twQL}{\mathrm{twQL}}
\newcommand{\twIS}{\mathrm{twIS}}
\newcommand{\ES}{\mathrm{ES}}

\usepackage{fancyhdr} 
\usepackage{sectsty}
\allsectionsfont{\sffamily\mdseries\upshape} 

\usepackage[nottoc,notlof,notlot]{tocbibind} 
\usepackage[titles,subfigure]{tocloft} 

\title{Evaluating time-to-event forecasts under fixed-horizon right-censoring}

\author{ Robert J. Taggart\thanks{Corresponding author: robert.taggart@bom.gov.au}, \,\, Nicholas Loveday, \,\, Simon Louis\\[1ex] Australian Bureau of Meteorology }

\date{15 September 2026}

\begin{document}

\maketitle

\begin{abstract}
\noindent Time-to-event forecasts frequently arise in applications with finite prediction or decision horizons, including in operational meteorology, hydrology, warranty analysis and insurance. In such settings, forecasts, observations, or both may only be available through their right-censored values relative to a common fixed censoring time. This article develops a framework for evaluating such forecasts.

For probabilistic forecasts, recent localization results for proper scoring rules imply coherent evaluation methods based on threshold-weighted versions of familiar scores, including the continuous ranked probability score (CRPS) and logarithmic score. These methods remain valid when forecasts themselves are right-censored to the prediction horizon.

For single-valued and interval-valued forecasts, we develop corresponding notions of local consistency and local elicitability under fixed-horizon right-censoring. We show that quantiles and prediction intervals whose endpoints are quantiles remain locally elicitable through threshold-weighted variants of quantile and interval scores. In contrast, the expectation functional is not locally elicitable, implying that mean-valued time-to-event forecasts do not admit analogous theoretically justified evaluation methods.

Several familiar diagnostic tools extend naturally to this setting. Threshold-weighted CRPS and quantile scores retain useful diagnostic representations through Brier-score decompositions and Murphy diagrams, while reliability diagrams based on quantile isotonic regression support diagnosis of conditional biases and forecast recalibration. A synthetic forecasting experiment together with operational flood and wind forecasting applications illustrates the methodology.

\vspace{10pt}

\noindent\textbf{Keywords:} Consistent scoring function; Elicitability; First passage time forecast; Forecast evaluation; Local elicitability; Local propriety; Proper scoring rule; Right-censoring; Time-to-event forecast.
\end{abstract}

\section{Introduction}

Time-to-event forecasts are critical in domains where the timing of an event, not merely its occurrence, has major implications for decision making, planning, and resource allocation. Such forecasts play important roles across diverse fields, including healthcare, engineering, emergency management, human resources, finance, and the physical and life sciences. Examples include survival time for cancer patients \citep{llobera2000terminal}, failure time for mechanical devices \citep{wang2015failure}, timing of geo-mechanical failure and landslides \citep{mufundirwa2010new}, tsunami arrival time \citep{greenslade2019evaluation}, flood peak arrival \citep{langridge2021dynamic}, creep rupture time of polymers \citep{spathis2012creep}, customer churn \citep{huang2012customer}, time to loan default \citep{sarlija2009comparison}, time-to-failure forecasts used in warranty analysis \citep{babakmehr2024data}, and time-to-claim forecasting problems in insurance \citep{berestizhevsky2021cox}.

Given their importance, such forecasts should be evaluated using rigorous methods to identify superior forecasters or models, promote continuous improvement and ultimately support better decisions. While a robust suite of evaluation techniques and diagnostic tools has emerged over the past two decades (e.g., \citealp{gneiting2007strictly, gneiting2011making, ehm2016quantiles, dimitriadis2024evaluating}), two challenges commonly arise in the evaluation of time-to-event forecasts, requiring extensions of existing tools. First, at the evaluation time, some cases may lack realized event times because the event has not yet occurred. Second, some forecasting systems or services have finite prediction horizons, and the event may not be forecast within that horizon for some forecast cases.

The second challenge frequently arises in meteorological and hydrological applications. For example, consider an operational flood forecasting system that is run daily and predicts the river height at hourly intervals out to 168 hours (7 days). An event is the first time, from the model initialization time, that the river height exceeds the major flood threshold. Let $x$ denote the forecast event time and $t$ the realized event time, both measured in hours relative to model initialization. Each model run therefore generates a forecast case, with a common forecast horizon $\tau$ of 168 hours. When $x$ and $t$ are both less than $\tau$, standard forecast evaluation methods can be applied directly. However, the river may not be forecast to exceed the threshold within the forecast horizon, in which case it is only known that $x > \tau$. This motivates the use of the right-censored forecast $\min(x,\tau)$, which is known, in place of $x$, together with suitably modified evaluation methods. Here $\tau$ is the censoring time. Right-censoring is a familiar statistical concept in survival analysis, reliability engineering and related disciplines.

More generally, information about forecasts, observations, or both may only be known through their right-censored values. In this paper we focus on the special case where the censoring time $\tau$ is fixed rather than a random variable. This setting arises naturally in physics-based and machine-learning weather prediction systems and in hydrological forecasting driven by such systems. It also arises in operational forecast services that issue predictions over a fixed decision horizon, and time-to-claim or time-to-failure forecasts associated with finite-horizon insurance and warranty policies. In contrast, we do not consider the random censoring mechanisms that dominate much of the biomedical survival-analysis literature, where censoring typically arises from irregular or incomplete observation records.

The aim of this paper is to provide a coherent framework for evaluating time-to-event forecasts in this setting. We consider both probabilistic forecasts and commonly used forecast summaries such as the mean, median, quantiles, and prediction intervals. Our goal is to identify sound methods that permit coherent forecast ranking, together with diagnostic tools that help explain forecast performance and reveal conditional biases. Wherever possible, we draw on existing theory and tools from forecast verification and survival analysis. Where existing results are unavailable, we develop the necessary theory. Our exposition first reviews evaluation when forecasts and observations are fully known, then examines modifications required in the presence of right-censored observations, and finally shows how the resulting methods naturally accommodate right-censored forecasts.

To assess and compare the accuracy of different forecasting methods, we evaluate forecasts against the corresponding observed outcomes. For probabilistic forecasts, including those from ensemble prediction systems \citep{mureau1993ensemble, molteni1996ecmwf, schaake2007hepex, cloke2009ensemble}, this is typically done using scoring rules, which assign a numerical score $S(F,t)$ to a predictive distribution $F$ when compared with the corresponding realization $t$. A well-developed theory of proper scoring rules has emerged over the past two decades \citep{gneiting2007strictly, gneiting2014probabilistic}. Proper scoring rules incentivize honest forecasting and provide a principled basis for ranking competing forecasting systems.

However, the standard theory of proper scoring rules assumes evaluation against the realized event time $t$. When only the right-censored realization $\min(t,\tau)$ is available, existing evaluation methods require modification. Recent work by \citet{de2025localizing} provides a general framework for localizing strictly proper scoring rules to regions of interest. In the fixed-horizon setting considered here, these results yield natural evaluation methods based on threshold-weighted versions of familiar scoring rules, including the continuous ranked probability score (CRPS) and logarithmic score \citep{gneiting2011comparing, diks2011likelihood}. We present these methods in a form tailored to the evaluation of time-to-event forecasts and illustrate their use through synthetic and operational forecasting examples.

Many forecast products are communicated not as full predictive distributions but as summaries of those distributions known as functionals. Examples include expected event times, median event times, upper quantiles, and prediction intervals describing plausible event times. The evaluation of such forecasts is commonly studied through the theory of elicitable functionals and consistent scoring functions \citep{gneiting2011making, lambert2008eliciting}. While many functionals are known to be elicitable when realizations are fully observed, corresponding questions under fixed-horizon right-censoring (or within the more general localizing framework of \citealt{de2025localizing}) appear not to have been studied previously.

To address this gap, we develop corresponding notions of local consistency and local elicitability for time-to-event forecast functionals under right-censoring. We show that quantile functionals remain elicitable in this setting through threshold-weighted variants of quantile loss \citep{taggart2022point}, and that prediction intervals whose endpoints are quantiles, including the interquartile interval, likewise remain elicitable through threshold-weighted interval scores. In contrast, we establish general conditions under which elicitability fails under right-censored evaluation, and use these results to show that the mean (or  expectation) functional is not elicitable in this setting. As a result, expected time-to-event forecasts, including those based on ensemble means, do not admit the same type of theoretically justified evaluation methods that are available for quantiles and prediction intervals.

Beyond forecast ranking, we demonstrate that several familiar diagnostic tools extend naturally to the fixed-horizon setting. Threshold-weighted CRPS admits a Brier-score decomposition  \citep{schumacher2003assess} that supports graphical diagnostics across decision thresholds, while threshold-weighted quantile scores admit Murphy-diagram representations \citep{ehm2016quantiles}. We also show that reliability diagrams based on quantile isotonic regression \citep{gneiting2023regression} remain useful for diagnosing conditional miscalibration and recalibrating time-to-event forecasts when both forecasts and realizations are right-censored.

The methodology is illustrated using a synthetic forecasting experiment together with two operational applications from the Australian Bureau of Meteorology: first passage time forecasts for river flooding and time-to-strong-wind forecasts. These examples demonstrate that the proposed methods remain practical and informative in forecasting systems where right-censoring is a natural consequence of finite forecast horizons.

The remainder of the article is organised as follows. Section~\ref{s:evaluation when known} reviews proper scoring rules, consistent scoring functions, and elicitability for uncensored time-to-event forecasts and introduces a synthetic forecasting experiment.  Section~\ref{s:right censored evaluation} presents evaluation methods for right-censored realizations and forecasts. Section~\ref{s:real examples} presents applications to operational flood and wind forecasting systems, with an emphasis on rankings, diagnostics and recalibration. Section~\ref{s:discussion} concludes with a summary of implications and key results for practitioners. Mathematical proofs are given in the appendix.

\section{Evaluation of time-to-event forecasts when both forecasts and realizations are known}\label{s:evaluation when known}

This section reviews established evaluation methods for time-to-event forecasts when both forecasts and realizations are known. We focus on the key concepts of strictly proper scoring rules, strictly consistent scoring functions, and elicitability, as these form the foundation for the modifications introduced later to handle right-censored data. To illustrate these concepts, and new concepts introduced later, we also present a synthetic experiment for time-to-event forecasts.

The general mathematical set-up and notation is as follows. The set of possible times at which a well-defined event could occur will be represented by an interval $I$ of the real line taking the form $[0,\infty)$, $[0,b)$ or $[0,b]$ for some positive $b$. The interior of $I$ consists of $I$ excluding its endpoints. The time-to-event random variable will typically be denoted by $T$ and takes values in $I$, and its realization (or observation) will typically be denoted by $t$. (For now we do not consider the case where there is positive probability that the event never materializes, though an extension of the theory to this case is discussed in Section~\ref{ss:right-censored forecasts}.)

We adopt a measure-theoretic framework for probability distributions and probability density functions (PDFs). To make the body of this paper as accessible as possible, technical terms related to measure theory are restricted to this paragraph and the appendix. Suffice to say, the setting described below is very general and includes virtually every probability distribution and PDF that arises in practical applications. Throughout, a time-to-event predictive distribution (i.e. probabilistic forecast) $F$ is assumed to be a Borel probability measure on $I$, and a class $\mathcal{F}$ of time-to-event predictive distributions is assumed to be contained within the set of Borel probability measures on $I$. A predictive distribution $F$ will be identified with its cumulative distribution function (CDF), such that given any time $s$ in $I$, $F(s)$ is the forecast probability that $T$ does not exceed $s$. Throughout we tacitly assume that scoring functions $S$ and associated weight functions $w$ are Borel measurable. Occasionally we work with classes $\mathcal{F}$ of probability measures that have densities (i.e. PDFs); in this case each probability measure in $\mathcal{F}$ is absolutely continuous with respect to Lebesgue measure unless stated otherwise.

\subsection{Proper scoring rules}

A scoring rule $S$ assigns a numerical score $S(F, t)$ to each pair $(F, t)$, where $F$ is a time-to-event predictive distribution and $t$ is the time-to-event realization. In this paper we adopt the convention that scoring rules are negatively oriented, so that the score represents a penalty and a lower score is better.

Given a random variable $T$ and probability distributions $F$ and $G$, we write $\EEE_F S(G,T)$ to denote the expectation of the score $S(G,T)$ given that the random variable $T$ is distributed according to $F$, assuming tacitly that the expectation exists.

\begin{defn}
Suppose that $\mathcal{F}$ is a class of probability distributions on $I$.
A scoring rule $S:\mathcal{F}\times I \to(-\infty,\infty]$ is said to be \textit{proper} relative to the class $\mathcal{F}$ if
\begin{equation}\label{eq:proper scoring rule}
\EEE_F S(F,T) \leq \EEE_F S(G,T)
\end{equation}
for all $F$ and $G$ in $\mathcal{F}$. It is  \textit{strictly proper} if equality in (\ref{eq:proper scoring rule}) implies that $F=G$.
\end{defn}

Proper scoring rules offer several advantages. If a time-to-event process $T$ follows distribution $F$, then $F$ achieves the best possible expected score under a proper scoring rule, and uniquely so under a strictly proper scoring rule. Moreover, if forecaster A makes optimal use of their available information and forecaster B has access to additional information and uses it optimally, then forecaster B will attain a better expected score when the scoring rule is strictly proper \citep{holzmann2014role}. Finally, if a forecaster genuinely believes the time-to-event process has distribution $F$, then issuing $F$ optimizes their expected score under a proper scoring rule. Thus, score optimization aligns with honest forecasting, avoiding the forecaster’s dilemma \citep{lerch2017forecaster}.

We give two examples of scoring rules that are proper relative to classes $\mathcal{F}$ defined on the interval $I$ and one that is not. The continuous ranked probability score $\CRPS$ is defined by
\begin{align}
\CRPS(F,t) 
&= \int_I \left( \one\{s\geq t\}-F(s)\right)^2\,\dd s \label{eq:CRPS1} \\
&= \EEE_F |T - t| - \tfrac{1}{2} \EEE_F |T - T'| \label{eq:CRPS2},
\end{align}
where $\one$ denotes the indicator function so that $\one\{s\geq t\}$ takes the value $1$ if $s\geq t$ and $0$ otherwise, and where in the last term $T$ and $T'$ are independent random variables with CDF $F$. The $\CRPS$ is strictly proper relative to the class $\mathcal{F}$ of probability measures with finite first moment \citep{matheson1976scoring, gneiting2007strictly}. The logarithmic score $\LogS$, defined by
\begin{equation}\label{eq:LogS}
\LogS(F,t) = -\ln F'(t),
\end{equation}
is strictly proper relative to the class $\mathcal{F}$ of probability distributions whose members $F$ have a probability density function (PDF) $F'$ \citep{good1952rational, gneiting2007strictly}, and is closely related to maximum likelihood estimation. In contrast, the linear score $\LinS$, given by
\[
\LinS(F,t) = -F'(t)
\]
for predictive distributions $F$ with a probability density function $F'$, is not a proper scoring rule, but rewards overprediction at the modes of a forecaster’s true predictive density \citep{winkler1969scoring, gneiting2007strictly}.

We illustrate these properties using a simple synthetic experiment. Let $\mathrm{Gamma}(\alpha, \beta)$ denote the gamma distribution positive shape parameter $\alpha$, positive rate parameter $\beta$, and support on $[0,\infty)$. To set up the experiment, suppose that for each forecast case $i$, where $1\leq i\leq 10000$, the time-to-failure $T_i$ of the $i$th product from an assembly line has distribution given by $T_i=X_i+Y_i+Z_i$, where $X_i\sim \mathrm{Gamma}(3, 1)$, $Y_i\sim \mathrm{Gamma}(2, 1)$, $Z_i \sim \mathrm{Gamma}(1, 1)$, the random variables are pairwise independent, and the units are in months. The gamma distribution has the property that $W_1+W_2\sim\mathrm{Gamma}(\alpha_1+\alpha_2, \beta)$ whenever $W_j\sim\mathrm{Gamma}(\alpha_j, \beta)$. It follows that the marginal distribution of $T_i$ is $\mathrm{Gamma}(6,1)$. The PDF of the $\mathrm{Gamma}(6,1)$ distribution is shown by the red curve in Figure~\ref{fig:synthetic example}a. In the following, denote by $a + \mathrm{Gamma}(\alpha,\beta)$ the translated gamma distribution with support on $[a,\infty)$, and by $x_i$, $y_i$, $z_i$ and $t_i$ the realizations of the random variables $X_i$, $Y_i$, $Z_i$ and $T_i$. We consider five forecasters, each of whom issues a predictive distribution $F_i$ for the random variable $T_i$ as follows.
\begin{enumerate}
\item \textit{LowInfoLucy} only has knowledge of the marginal distribution of $T_i$. For each forecast case she issues the ideal forecast based on this information, namely $F_i=\mathrm{Gamma}(6, 1)$.
\item \textit{ModInfoMuli} knows the realization $x_i$ and the marginal distribution of $Y_i+Z_i$. He issues the ideal forecast based on this information, namely $F_i=x_i+\mathrm{Gamma}(3,1)$.
\item \textit{HighInfoHannah} knows the combined realization $x_i+y_i$ and the true distribution of $Z_i$. She issues the ideal forecast based on this information, namely $F_i=x_i+y_i+\mathrm{Gamma}(1,1)$.
\item \textit{PessimisticPenny} has the same knowledge as HighInfoHannah but underpredicts $T_i$ by misspecifying the rate parameter of $Z_i$. She issues the forecast $F_i=x_i+y_i+\mathrm{Gamma}(1,2)$.
\item \textit{OptimisticOmar} has the same knowledge as HighInfoHannah but overpredicts $T_i$ by misspecifying the rate parameter of $Z_i$. He issues the forecast $F_i=x_i+y_i+\mathrm{Gamma}(1,1/3)$.
\end{enumerate}

\begin{figure}[t!]
\centering
\includegraphics[width=0.75\textwidth]{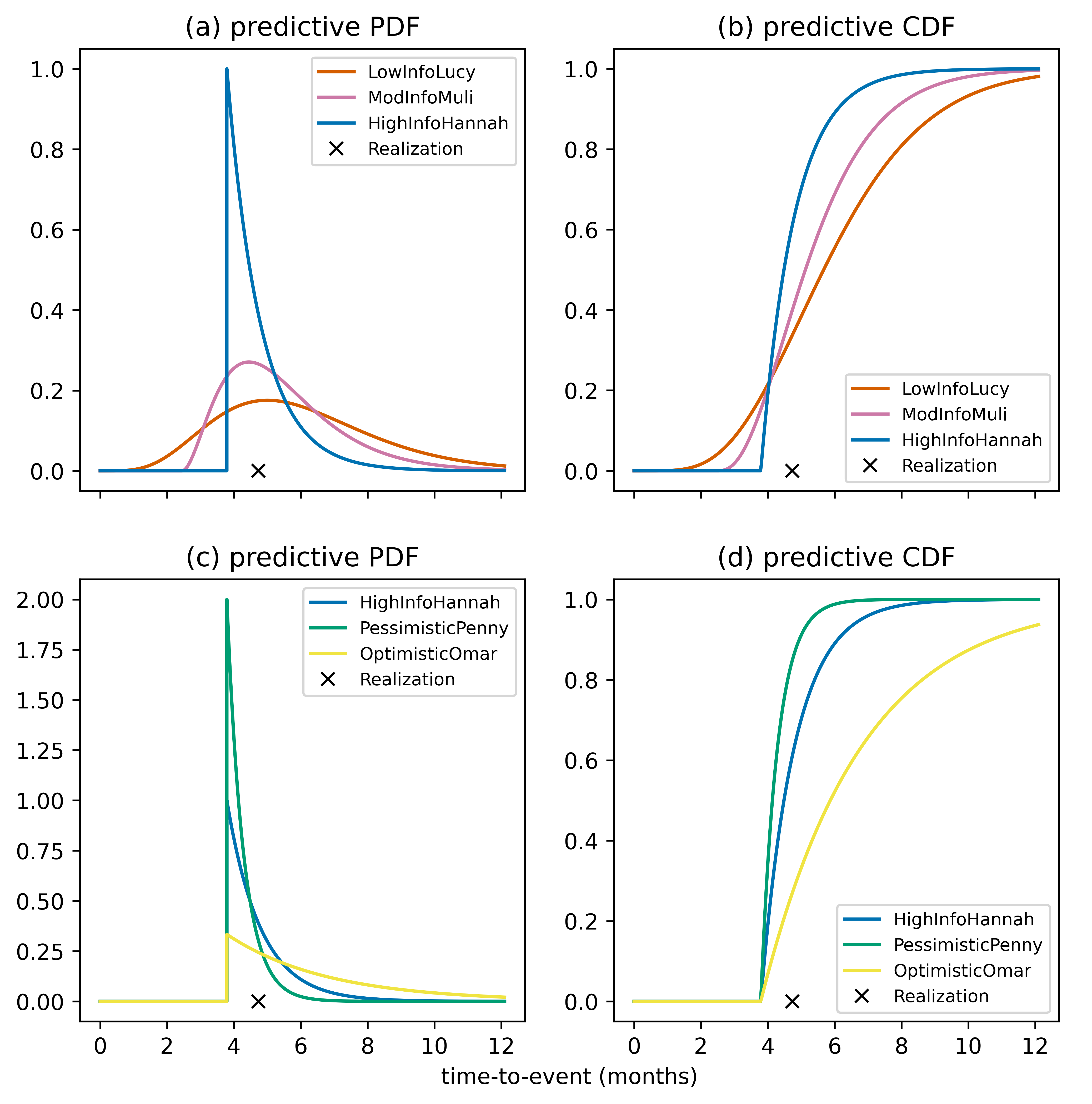}
\caption{Example of time-to-event predictive PDFs and CDFs of the five forecasters in the synthetic experiment, along with the corresponding realization. The top row illustrates the distributions of the forecasters who make ideal forecasts, but with access to different information. The bottom row shows the distributions of the forecasters who have access to the same information, but where two of the forecasters' distributions are misspecified.}
\label{fig:synthetic example}
\end{figure}

Figure~\ref{fig:synthetic example} illustrates the predictive PDFs and CDFs for each forecaster for one particular forecast case where $x_i=2.45$, $y_i=1.34$, $z_i=0.94$ and $t_i=4.53$. Panel (a) shows the PDFs of the three forecasters who issue ideal predictions based on the information that is available to them, and illustrates the effect of having access to increasing information: the predictions become sharper. The CDFs in panel (d) show the effect of misspecification: PessimisticPenny's CDF lies above that of HighInfoHannah, indicating systematically earlier predicted failure times, whereas OptimisticOmar's CDF lies below it, indicating systematically later predicted failure times. The red distributions issued by LowInfoLucy do not change with forecast case $i$ and equal the marginal distribution of the time-to-failure process.

The experiment is designed so that expected pairwise rankings of predictive performance among some forecasters are known \textit{a priori} as follows:
\begin{equation}\label{eq:rankings}
\begin{cases}
\text{ModInfoMuli's forecasts are superior to LowInfoLucy's forecasts;} \\
\text{HighInfoHannah's forecasts are superior to those of any other forecaster.}
\end{cases}
\end{equation}
This is because (i) LowInfoLucy, ModInfoMuli and HighInfoHannah issue ideal forecasts with respect to increasing nested information sets, and (ii) HighInfoHannah, PessimisticPenny and OptimisticOmar share the same information set but only HighInfoHannah specifies forecasts correctly. Consequently, given a sufficiently large and representative sample of forecast cases, any sound scoring rule should rank forecasters consistently with (\ref{eq:rankings}). 

Table~\ref{tab:scoring rule results} gives the mean scores for each forecaster in the synthetic experiment using $\CRPS$, $\LogS$ and $\LinS$. The CRPS for a gamma distribution can be efficiently calculated using the formula of \citet[Section~3.2]{scheuerer2015probabilistic}. A lower mean score is intended to convey that predictive performance is better on average. Note that rankings from the two strictly proper scoring rules $\CRPS$ and $\LogS$ are consistent with (\ref{eq:rankings}). The lowest mean score as measured by $\LinS$ was attained by PessimisticPenny, violating (\ref{eq:rankings}), and illustrating that ranking forecasters using a non-proper scoring rule can lead to misguided inferences. Given a scoring rule, all pairwise performance rankings in Table~\ref{tab:scoring rule results} are statistically significant in the following sense: the null hypothesis of equal predictive performance is rejected at the 95\% confidence level using the \citet{hering2011comparing} test statistic for the \citet{diebold2002comparing} test on score differentials. All claims of statistical significance in this paper use this criterion.

\begin{table}[t!]
    \renewcommand{\arraystretch}{1.2} 
    \centering  \scriptsize
    \begin{tabular}{|c|r|r|r|r|r|}
    \hline
        scoring rule & LowInfoLucy & ModInfoMuli & HighInfoHannah & PessimisticPenny & OptimisticOmar \\ \hline
        $\CRPS$ & 1.374 & 0.949 & 0.495 & 0.576 & 1.001 \\ 
        $\LogS$ & 2.275 & 1.858 & 0.992 & 1.290 & 1.429 \\ 
        $\LinS$ \dag & -0.122 & -0.186 & -0.502 & -0.670 & -0.251 \\ \hline
    \end{tabular}
    \caption{Mean scores for the five different forecasters from the synthetic experiment, using the scoring rules $\CRPS$, $\LogS$ and $\LinS$. A lower score is intended to convey better predictive performance on average. A dagger symbol \dag \phantom{,}indicates that the scoring rule is not proper. Given a scoring rule, each pairwise performance ranking based on mean scores is statistically significant.}
    \label{tab:scoring rule results}
\end{table}

\subsection{Consistent scoring functions}

While a predictive distribution $F$ for a time-to-event variable $T$ provides the most complete representation of uncertainty, in many practical settings a summary --- such as the mean, a specific percentile, or the interquartile interval (IQI) --- offers greater utility for decision making, communication, or reporting. For example, when informing an impacted community about when major flooding is forecast to commence, the phrase ``likely 6 to 9 hours from now'' is far more accessible than a graph of a CDF. Mappings that present some summary feature of a distribution are called \textit{functionals}. Table~\ref{tab:functional examples} provides examples of functionals for the predictive distributions of Figure~\ref{fig:synthetic example}. In the context of time-to-event forecasts, a functional $U$ is a map from some class $\mathcal{F}$ of probability distributions on the interval $I$ to a subset of $I ^k$ for some $k\geq1$. In many practical contexts, typically $k=1$ and $U(F)$ is a singleton (i.e. $U(F)$ is a single value) from $I$, such as when $U$ is the mean (i.e. expectation) functional. But sometimes $U(F)$ may have multiple values, such as when $U$ is the median functional and the graph of the predictive CDF $F$ is horizontal at height $0.5$. At other times, $U(F)$ may be a vector, such as the interquartile interval ($k=2$).

\begin{table}[t!]
    \renewcommand{\arraystretch}{1.2} 
    \centering  \scriptsize
    \begin{tabular}{|c|r|r|r|r|r|}
    \hline
        functional & LowInfoLucy & ModInfoMuli & HighInfoHannah & PessimisticPenny & OptimisticOmar \\ \hline
        mean & 6.00 & 5.45 & 4.79 & 4.29 & 6.79 \\ 
        median & 5.67 & 5.12 & 4.48 & 4.14 & 5.87 \\ 
        0.1-quantile & 3.15 & 3.55 & 3.90 & 3.84 & 4.11 \\
        IQI & (4.22, 7.42) & (4.18, 6.37) & (4.08, 5.18) & (3.93, 4.48) & (4.65, 7.95) \\ \hline
    \end{tabular}
    \caption{Functional values from the predictive distributions of Figure~\ref{fig:synthetic example}. The 0.1-quantile is equivalent to the 10th percentile and IQI is the interquartile interval.}
    \label{tab:functional examples}
\end{table}

The predictive performance of these distributional summaries can be evaluated using a scoring function $S$, which assigns a score to any given forecast--realization pair. Again we adopt the convention that $S$ is negatively oriented, so that the smaller the score, the better. A desirable property of a scoring function $S$ for time-to-event forecasts is that it is \textit{consistent} for the functional of interest, as per the following definition inspired by \cite{gneiting2011making}. Below, $\mathcal{P}(I^k)$ denotes the power set of $I^k$, which is the collection of all subsets of $I^k$.

\begin{defn}\label{def:consistent}
Suppose that $k\geq 1$. The scoring function $S: I^k \times I \to \RRR$ is \textit{consistent} for the functional $U: \mathcal{F}\to\mathcal{P}(I^k)$ relative to the class $\mathcal{F}$ if
\begin{equation}\label{eq:consistent}
\EEE_F S(u, T) \leq \EEE_F S(x, T)
\end{equation}
for all probability distributions $F$ in $\mathcal{F}$, all $u$ in $U(F)$, and all $x$ in $I^k$. It is \textit{strictly consistent} if it is consistent and equality in (\ref{eq:consistent}) implies that $x\in U(F)$.
\end{defn}

Analogous to proper scoring rules, a scoring function that is consistent for a functional $U$ incentivizes forecasters to report a value from $U(F)$, where $F$ represents their best probabilistic assessment of the time-to-event distribution $T$ given available information. If $T$ is generated from a process with distribution $F$, then values from $U(F)$ yield the optimal expected score.

We summarize some results from the theory of consistent scoring functions \citep{gneiting2011making} and illustrate them using forecasts from the synthetic experiment. A single-valued time-to-event forecast from $I$ is denoted by $x$, with corresponding realization $t$. The squared error scoring function $S(x,t) = (x - t)^2$ is strictly consistent for the mean functional relative to the class of probability distributions on $I$ with finite second moment. The first row of Table~\ref{tab:scoring function results} shows the mean squared error (MSE) for each forecaster when using the mean (expected value) of their predictive distribution. Rankings based on MSE agree with (\ref{eq:rankings}). The absolute error scoring function $S(x,t) = |x - t|$ is consistent for the median functional relative to distributions with finite first moment (e.g., \citealp{raiffa1961applied}), but not for the mean functional. The second row of Table~\ref{tab:scoring function results} illustrates that using a scoring function inconsistent with the functional can lead to misleading inferences; here, PessimisticPenny outperforms HighInfoHannah, contrary to (\ref{eq:rankings}). The third row shows that using a strictly consistent scoring function --- absolute error for the median functional --- restores rankings consistent with (\ref{eq:rankings}).

\begin{table}[t!]
    \renewcommand{\arraystretch}{1.2} 
    \centering  \scriptsize
    \begin{tabular}{|c|c|r|r|r|r|r|}
    \hline
        scoring function & functional & LowInfoLucy & ModInfoMuli & HighInfoHannah & PessimisticPenny & OptimisticOmar \\ \hline
        squared error & mean & 6.189 & 3.066 & 0.987 & 1.229 & 5.021 \\ 
        absolute error \dag & mean & 1.954 & 1.359 & 0.729 & 0.706 & 2.106 \\ 
        absolute error & median & 1.939 & 1.335 & 0.686 & 0.754 & 1.330 \\ 
        $\QL_{0.9}$ & 0.9-quantile & 0.506 & 0.372 & 0.229 & 0.325 & 0.593 \\ 
        $\IS_{0.5}$ & IQI & 1.545 & 1.068 & 0.557 & 0.640 & 1.125 \\ \hline
    \end{tabular}
    \caption{Mean scores for each forecaster from the synthetic experiment, with forecasts generated from a functional of their predictive distribution. In each case, a lower score is desirable. A dagger symbol \dag \phantom{,}indicates that the scoring function is not consistent for the paired functional. Given a scoring method, each pairwise performance ranking based on mean scores is statistically significant apart from absolute error on the median forecasts for the comparison ModInfoMuli--OptimisticOmar.}
    \label{tab:scoring function results}
\end{table}

If $0<\alpha<1$ then the quantile loss scoring function $\QL_\alpha$, given by
\begin{equation}\label{eq:check loss}
\QL_\alpha(x,t) = \begin{cases}
\alpha(t - x) & \text{if } x < t \\
(1-\alpha)(x - t) & \text{if } x \geq t,
\end{cases}
\end{equation}
is strictly consistent for the $\alpha$-quantile functional relative to the class of probability distributions with finite first moment \citep{raiffa1961applied}. By taking $\alpha=0.5$, one attains the result that the absolute error is strictly consistent for the median. The fourth row of Table~\ref{tab:scoring function results} gives the mean quantile loss for 0.9-quantile (90th percentile) time-to-event forecasts, with rankings that agree with (\ref{eq:rankings}). Quantile loss is not the only strictly consistent scoring function for the quantile functional. If $g:I\to\RRR$ is a strictly increasing function then the scoring function $S:I\times I\to\RRR$, given by
\begin{align}
S(x,t) 
&= (\one\{x\geq t\} - \alpha)(g(x)-g(t)) \label{eq:consistent quantile sf} \\
&= \begin{cases}
\alpha(g(t) - g(x)) & \text{if } x < t \\
(1-\alpha)(g(x) - g(t)) & \text{if } x \geq t,
\end{cases} \notag
\end{align}
is strictly consistent relative to the class of probability distributions $F$ on $I$ for which $\EEE_F g(T)$ exists and is finite \citep{thomson1978eliciting, saerens2000building, gneiting2011quantiles}. This fact will be critical for dealing with right-censored data in the next section.

The IQI is a special case of the central $(1-\alpha)\times 100$\% prediction interval $(x_1, x_2)$, where $x_1$ is an $\tfrac{\alpha}{2}$-quantile and $x_2$ is a $(1-\tfrac{\alpha}{2})$-quantile of the predictive distribution $F$. The interval scoring function $\IS_{\alpha}: I^2 \times I \to [0,\infty)$ is given by
\begin{align}
\IS_\alpha((x_1, x_2), t)
&=\QL_{\alpha/2}(x_1,t) + \QL_{1-\alpha/2}(x_2,t) \\
&=\one\{t<x_1\}(x_1-t) + \tfrac{\alpha}{2}(x_2-x_1) + \one\{t>x_2\}(t-x_2) \label{eq:IS clear form}
\end{align}
for all $(x_1, x_2)$ in $I^2$ and $t$ in $I$ \citep{gneiting2007strictly}. Equation~(\ref{eq:IS clear form}) decomposes $\IS_{\alpha}$ into three components: overprediction penalty, interval-width penalty, and underprediction penalty. Narrow intervals that contain the realization are rewarded. When $x_1=x_2$, the score reduces to the absolute error. Because $\IS_{\alpha}$ is the sum of two strictly consistent scoring functions for quantiles, it is strictly consistent for the central prediction interval functional relative to distributions with finite first moment \citep[Lemma~2.6]{fissler2016higher}. The last row of Table~\ref{tab:scoring function results} shows mean interval scores for each forecaster’s IQI predictions in the synthetic experiment, generating rankings consistent with (\ref{eq:rankings}). More generally, $\QL_{\alpha_1}+\QL_{\alpha_2}$ is strictly consistent for intervals whose endpoints are $\alpha_1$- and $\alpha_2$-quantiles, relative to the class of distributions with finite first moment.

\begin{defn}\label{def:elicitability}\citep{lambert2008eliciting}
A functional $U$ is \textit{elicitable} relative to the class of probability distributions $\mathcal{F}$ if there exists a scoring function $S$ that is strictly consistent relative to $\mathcal{F}$.
\end{defn}

It is important that a functional is elicitable, else it remains unclear how one might rank competing forecasters while incentivizing truthful reporting of the functional value of their predictive distributions. Many commonly used functionals are elicitable under mild regularity conditions, including the mean, median, quantiles and prediction intervals whose endpoints are quantiles \citep{savage1971elicitation, thomson1978eliciting, saerens2000building}, while the mode is not elicitable \citep{heinrich2014mode}.
In the next section we investigate which of these functionals remain elicitable when evaluation is performed using right-censored data.

\section{Evaluation with right-censored observations and forecasts}\label{s:right censored evaluation}

We now consider evaluation relative to a common fixed censoring time $\tau$. While the realized event time $t$ may be unknown, we assume that the right-censored realization $\min(t,\tau)$ is available and can be used for evaluation. Throughout, assume that the right-censoring time $\tau$ is positive and belongs to the interior of the time interval $I$. To simplify notation, given a real number $t$, a vector $x=(x_1,\ldots,x_k)$ in $\RRR^k$, a random variable $T$ taking values in $I$, a subset $A$ of real numbers and a CDF $F:I\to[0,1]$, let $[\,\cdot\,]_\tau$ denote their right-censored counterparts as follows:
\begin{align*}
[t]_\tau &= \min(t,\tau) \\
[x]_\tau &= \big(\min(x_1,\tau),\ldots,\min(x_k,\tau)\big) \\
[T]_\tau &=  \min(T,\tau) \\
[A]_\tau &= \{\min(a,\tau): a\in A \} \\
[F]_\tau(s) &=
\begin{cases}
F(s), & s < \tau, \\
1, & s \geq \tau.
\end{cases}
\end{align*}

\subsection{Strictly locally proper scoring rules}\label{ss:provisional scoring rules}

To illustrate the need for a rigorous approach to evaluation under fixed-horizon right-censoring, consider assessing the CDF forecasts from the synthetic experiment after 4 months. Suppose we use the strictly proper scoring rule $\CRPS$ but restrict evaluation to cases where the event has occurred (i.e., $t_i \leq 4$), which accounts for about 21\% of the 10,000 forecast cases. The first row of Table~\ref{tab:provisional results} shows the mean $\CRPS$ for each forecaster, and it is immediately clear that (\ref{eq:rankings}) is violated: HighInfoHannah does not achieve the lowest mean score. This occurs because the optimal strategy under this evaluation method is to forecast the $\tau$-truncated version of one’s true probabilistic beliefs rather than the full distribution.

\begin{table}[t!]
    \renewcommand{\arraystretch}{1.2} 
    \centering  \scriptsize
    \begin{tabular}{|c|c|r|r|r|r|r|}
    \hline
        forecast & score method & LowInfoLucy & ModInfoMuli & HighInfoHannah & PessimisticPenny & OptimisticOmar \\ \hline
        CDF & $\CRPS$ if $t\leq4$ \dag & 1.749 & 0.841 & 0.315 & 0.237 & 1.127 \\ 
        CDF & $\survCRPS_2$ \dag & 0.059 & 0.025 & 0.007 & 0.005 & 0.025 \\
        CDF & $\twCRPS_6$ & 0.627 & 0.440 & 0.232 & 0.275 & 0.380 \\ 
        CDF & $\twCRPS_{12}$ & 1.339 & 0.920 & 0.479 & 0.558 & 0.943 \\
        CDF & $\twLogS_6$ & 1.475 & 1.140 & 0.546 & 0.711 & 0.786 \\ 
        CDF & $\twLogS_{12}$ & 2.243 & 1.827 & 0.969 & 1.260 & 1.397 \\
        0.9-quantile & $\twQL_{0.9,6}$ & 0.096 & 0.096 & 0.082 & 0.126 & 0.096 \\ 
        0.9-quantile & $\twQL_{0.9,12}$ & 0.474 & 0.350 & 0.217 & 0.311 & 0.509 \\ 
        IQI & $\twIS_{0.5,6}$ & 0.694 & 0.494 & 0.262 & 0.309 & 0.413 \\ 
        IQI & $\twIS_{0.5,12}$ & 1.510 & 1.037 & 0.539 & 0.621 & 1.075 \\ \hline
    \end{tabular}
    \caption{Mean scores for each of the forecasters from the synthetic experiment, for different forecast types and different methods for dealing with right-censored realizations. The final subscript in each scoring rule or function indicates the right-censoring time in months. In each case, a lower score is desirable. A dagger symbol \dag \phantom{,}indicates that the scoring method is not proper. Given a scoring method, each pairwise performance ranking based on mean scores is statistically significant apart from $\twQL_{0.9,6}$ for the comparisons LowInfoLucy--ModInfoMuli and ModInfoMuli--OptimisticOmar.}
     \label{tab:provisional results}
\end{table}

To develop sound methods for evaluation under fixed-horizon right-censoring, we require an analogue of strictly proper scoring rules, adapted so that a predictive distribution $G$ is scored against the right-censored realization. In this setting, $\EEE_F S(G,[T]_\tau)$ denotes the expectation of the score $S(G,[T]_\tau)$, where the time-to-event random variable $T$ has distribution $F$.

\begin{defn}\label{def:provisionally strictly proper}
A scoring rule $S:\mathcal{F}\times I\to(-\infty,\infty]$ is \textit{locally proper} on the interval $[0,\tau)$ relative to the class of probability distributions $\mathcal{F}$ if
\begin{equation}\label{eq:provisionally proper}
\EEE_F S(F, [T]_\tau) \leq \EEE_F S(G, [T]_\tau)
\end{equation}
for all $F$ and $G$ in $\mathcal{F}$. It is  \textit{strictly locally proper} on $[0,\tau)$ if it is locally proper and equality in (\ref{eq:provisionally proper}) implies that $[G]_\tau=[F]_\tau$.
\end{defn}

Thus a strictly locally proper scoring rule $S$ on $[0,\tau)$ encourages forecasters to quote any distribution that aligns with their best judgment on the interval $[0,\tau)$, given evaluation using the right-censored realization.

Definition~\ref{def:provisionally strictly proper} is a specialization of the notion of strict local propriety of \citet{holzmann2017focusing} and \citet[Definition~4]{de2025localizing} to the setting where localization is induced by right-censoring at a fixed horizon $\tau$. It is also closely related to ideas of \citet{rindt2022survival}, who essentially prove that several evaluation methods used in the survival analysis literature are not proper when using right-censored observations. One such scoring rule is the survival-CRPS \citep{avati2020countdown} for right-censored data, which for right-censoring time $\tau$ is given by
\begin{equation}\label{eq:survival CRPS}
\survCRPS_\tau(F, t)=
\begin{cases}
\CRPS(F,t) & \text{if } t<\tau \\
\displaystyle\int_0^\tau F(s)^2\,\dd s & \text{if } t\geq \tau.
\end{cases}
\end{equation}
Mean survival-CRPS results when $\tau=2$ for the synthetic experiment are presented in the second row of Table~\ref{tab:provisional results}, with implied rankings violating (\ref{eq:rankings}).

The following theorem allows one to construct strictly locally proper scoring rules from strictly proper scores. It is a special case of Theorem 2 of \citet{de2025localizing}, applied to fixed-horizon right-censoring. We state it separately because of its simple form and practical relevance for time-to-event forecast evaluation.

\begin{thm}\label{thm:provisional from strictly proper 1}
Suppose that $\mathcal{F}$ and $\mathcal{F}_*$ are two classes of distributions on $I$ such that $[F]_\tau\in\mathcal{F}_*$ whenever $F\in\mathcal{F}$. If 
$S$ is a strictly proper scoring rule relative to $\mathcal{F}_*$ then the scoring rule $S_\tau$, defined by $S_\tau(F,t)=S([F]_\tau,t)$, is strictly locally proper on $[0,\tau)$ relative to $\mathcal{F}$.
\end{thm}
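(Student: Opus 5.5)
The plan is to reduce local propriety of $S_\tau$ to ordinary strict propriety of $S$, using the fact that the right-censored variable $[T]_\tau$ is itself distributed according to the right-censored CDF $[F]_\tau$. Once this is established, the expected score of $S_\tau$ under right-censored evaluation is an ordinary expected score of $S$ under the distribution $[F]_\tau$, and strict propriety of $S$ relative to $\mathcal{F}_*$ finishes the argument.

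\textbf{Step 1: law of $[T]_\tau$.} Suppose $T$ has distribution $F$. For $s<\tau$ we have $\{\min(T,\tau)\le s\}=\{T\le s\}$, so $\PPP([T]_\tau\le s)=F(s)$. For $s\ge\tau$ the event $\{\min(T,\tau)\le s\}$ is certain, so $\PPP([T]_\tau\le s)=1$. Hence the CDF of $[T]_\tau$ is exactly $[F]_\tau$. Since $\tau$ lies in the interior of $I$, $[F]_\tau$ is a genuine Borel probability measure on $I$: it agrees with $F$ on $[0,\tau)$ and places an atom of mass $1-F(\tau^-)$ at $\tau$. Because a Borel probability measure on $I$ is determined by its CDF, for any measurable $h$ we get
\[
\EEE_F\, h([T]_\tau)=\EEE_{[F]_\tau}\, h(T),
\]
with the two sides existing or failing to exist together.

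\textbf{Step 2: rewrite the expected score.} Fix $F,G\in\mathcal{F}$ and apply Step 1 with $h(t)=S([G]_\tau,t)$. This gives
\[
\EEE_F\, S_\tau(G,[T]_\tau)=\EEE_F\, S([G]_\tau,[T]_\tau)=\EEE_{[F]_\tau}\, S([G]_\tau,T).
\]
The same identity with $G=F$ gives $\EEE_F\, S_\tau(F,[T]_\tau)=\EEE_{[F]_\tau}\, S([F]_\tau,T)$.

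\textbf{Step 3: apply strict propriety.} By hypothesis, $[F]_\tau$ and $[G]_\tau$ both belong to $\mathcal{F}_*$, and $S$ is strictly proper relative to $\mathcal{F}_*$. Propriety gives
\[
\EEE_{[F]_\tau} S([F]_\tau,T)\le \EEE_{[F]_\tau} S([G]_\tau,T),
\]
which is (\ref{eq:provisionally proper}) for $S_\tau$. If equality holds in (\ref{eq:provisionally proper}), then by Step 2 equality holds in the line above, and strictness forces $[G]_\tau=[F]_\tau$. This is exactly strict local propriety on $[0,\tau)$ in the sense of Definition~\ref{def:provisionally strictly proper}.

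\textbf{Main obstacle.} The only nonroutine point is Step 1: justifying that $[T]_\tau$ has law $[F]_\tau$ as a measure on $I$, and that expectations transfer through this identity. This is where the interior assumption on $\tau$ matters, since it guarantees that the atom at $\tau$ lies inside $I$. The existence of the relevant expectations is taken tacitly, as elsewhere in the paper.
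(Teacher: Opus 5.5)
Your proposal is correct and follows essentially the same route as the paper's proof: identify the law of $[T]_\tau$ under $F$ with $[F]_\tau$, rewrite $\EEE_F S_\tau(G,[T]_\tau)$ as $\EEE_{[F]_\tau} S([G]_\tau,T)$, and invoke strict propriety of $S$ relative to $\mathcal{F}_*$. Your Step~1 spells out the distributional identity that the paper calls ``a simple calculation'', including the correct atom of mass $1-F(\tau^-)$ at $\tau$.
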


We apply this theorem to the $\CRPS$ and $\LogS$, with threshold-weighted versions of these emerging as strictly locally proper scoring rules. Given a weight function $w:I\to[0,1]$, the threshold-weighted continuous ranked probability score ($\twCRPS$) is defined by
\begin{equation}\label{eq:twcrps}
\twCRPS(F, t; w) =  \int_0^\infty (\one\{s\geq t\} - F(s))^2 \, w(s) \,\dd s ,
\end{equation}
for all $t$ in $I$ and all predictive distributions $F$ for which the integral exists and is finite. The $\twCRPS$ can also be expressed as
\begin{align}
\twCRPS(F, t; w) &= \EEE_F|v(X) - v(t)| - \frac{1}{2} \EEE_F|v(X) - v(X')|, \label{eq:twcrps chaining}
\intertext{where}
v(x) &= \int_{[0,x)}w(s)\,\dd s \label{eq:chaining function}
\end{align}
for all $x$ in $I$ and where $X$ and $X'$ are independent random variables with distribution $F$. The function $v$ of (\ref{eq:chaining function}), called a chaining function \citep{allen2023evaluating}, once calculated facilitates fast computation of $\twCRPS$ via (\ref{eq:twcrps chaining}) when the CDF $F$ is derived from an ensemble of forecasts.

The threshold-weighted logarithmic score $\twLogS$ (also known as the censored likelihood score) of \citet{diks2011likelihood} is given by
\begin{equation}\label{eq:twLogS}
\twLogS(F, t; w) = -w(t)\ln F'(t) - (1-w(t))\ln\Big( 1 - \int_I w(s)F'(s)\,\dd s \Big),
\end{equation}
where $F'$ denotes the density of $F$.

For the special case when $w(s)=\one\{0\leq s < \tau\}$, one obtains
\begin{align}
\twCRPS_\tau(F, t)
&= \int_0^\tau (\one\{s\geq t\} - F(s))^2 \, \dd s \label{eq:twcrps specific} \\
& = \EEE_F\big|[X]_\tau - [t]_\tau\big| - \tfrac{1}{2} \EEE_F\big|[X]_\tau - [X']_\tau\big|. \label{eq:twcrps chaining specific}
\end{align}
and
\begin{equation}\label{eq:twlogs special case}
\twLogS_\tau(F, t) = -\one\{0\leq t < \tau\}\ln F'(t) - \one\{t \geq \tau\} \ln(1 - F(\tau)).
\end{equation}
The latter yields a score that combines density evaluation below $\tau$ with a binary logarithmic score for survival beyond $\tau$.

The following corollary of Theorem~\ref{thm:provisional from strictly proper 1} is a special case of the localization framework of \citet{de2025localizing}, stated here in a form tailored to the evaluation of time-to-event forecasts.

\begin{cor}\label{cor:twCRPS}
Suppose that the weight function $w:I\to[0,1]$ is strictly positive on $[0,\tau)$ and zero elsewhere.
\begin{enumerate}
\item[(i)] The $\twCRPS$ with weight function $w$ is strictly locally proper on the interval $[0,\tau)$ relative to the class $\mathcal{F}$ of probability distributions on $I$.
\item[(ii)] The $\twLogS$ with weight function $w$ is strictly locally proper on the interval $[0,\tau)$ relative to the class of probability distributions on $I$ with a probability density function.
\end{enumerate}
\end{cor}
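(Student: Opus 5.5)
The plan is to reduce both parts to Theorem~\ref{thm:provisional from strictly proper 1} by exhibiting each threshold-weighted score as $S_\tau(F,t)=S([F]_\tau,t)$ for some strictly proper $S$, up to a term that does not depend on $F$. Where that identity is awkward, I will instead verify local propriety directly from the expected-score difference.

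\emph{Part (i).} Since $w$ vanishes on $[\tau,\infty)$, $\twCRPS(F,t;w)$ depends on $F$ only through its values on $[0,\tau)$, where $F=[F]_\tau$. Hence $\twCRPS(F,t;w)=\twCRPS([F]_\tau,t;w)$. For $s<\tau$ we also have $\one\{s\geq t\}=\one\{s\geq [t]_\tau\}$, so the score depends on $t$ only through $[t]_\tau$, and $\EEE_F\,\twCRPS(G,[T]_\tau;w)=\EEE_F\,\twCRPS(G,T;w)$.

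The standard pointwise computation then gives
\[
\EEE_F\,\twCRPS(G,T;w)-\EEE_F\,\twCRPS(F,T;w)=\int_0^\tau (G(s)-F(s))^2 w(s)\,\dd s \geq 0 .
\]
This holds for every $F$ and $G$, because all integrands are bounded on a bounded interval and no moment condition is needed. If equality holds, then $G=F$ Lebesgue-almost everywhere on $[0,\tau)$, since $w>0$ there. Right-continuity of CDFs upgrades this to $G=F$ on all of $[0,\tau)$, which is exactly $[G]_\tau=[F]_\tau$.

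Equivalently, one could apply Theorem~\ref{thm:provisional from strictly proper 1} with $S=\twCRPS(\cdot,\cdot;w)$ on $\mathcal{F}_*$, the class of distributions with $F=1$ on $[\tau,\infty)$. On $\mathcal{F}_*$ the same identity shows this $S$ is strictly proper, because there $[G]_\tau=G$.

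\emph{Part (ii).} I will compute $\EEE_F\,\twLogS(G,[T]_\tau;w)$ directly. Write $f,g$ for the densities and set $p_F=\int w f$ and $p_G=\int w g$. Because $w(t)=0$ for $t\geq\tau$, we have $w([t]_\tau)=w(t)$ for $t<\tau$ and $w(\tau)=0$, so
\[
\EEE_F\,\twLogS(G,[T]_\tau;w)=-\int_0^\tau w f\ln g\,\dd s-\Bigl(1-p_F\Bigr)\ln(1-p_G).
\]
Subtracting the corresponding expression with $G=F$, the difference becomes a Kullback--Leibler divergence. Its first part is the divergence between the sub-probability measures $wf$ and $wg$ (normalised by $p_F$ and $p_G$), and its second part is the binary divergence between $p_F$ and $p_G$. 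More precisely, I will use the known decomposition of \citet{diks2011likelihood}: the difference equals the KL divergence between the distributions with densities $(wf,\ 1-p_F)$ and $(wg\cdot c,\ \dots)$. This is the step I expect to be the main obstacle, because $w$ is not an indicator, so the ``censored'' distribution is not simply $[F]_\tau$.

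My fallback is Gibbs' inequality applied to the measure $\mu_F$ that puts density $w f$ on $[0,\tau)$ and a point mass $1-p_F$ at an auxiliary point. This requires a careful check that $w f$ with the extra mass is a probability measure, and I will take that as known from \citet{diks2011likelihood} or Theorem~2 of \citet{de2025localizing}. Nonnegativity of the divergence then gives local propriety.

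For strictness, equality forces $wf=wg$ almost everywhere, so $f=g$ almost everywhere on $[0,\tau)$ since $w>0$ there. Integrating gives $F=G$ on $[0,\tau)$, that is, $[F]_\tau=[G]_\tau$.
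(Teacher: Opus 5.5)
Your proof is correct, and in both parts it is more direct than the paper's.

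\textbf{Part (i).} The paper never computes an expected score. It extends $w$ beyond $\tau$ to a weight $w_1$ that is positive on all of $I$, so that the chaining function is injective. It then quotes \citet[Proposition~3]{allen2023evaluating} for strict propriety of $\twCRPS(\cdot,\cdot;w_1)$ on distributions with finite first moment, and applies Theorem~\ref{thm:provisional from strictly proper 1}. Your identity $\int_0^\tau (G-F)^2 w\,\dd s$, combined with right-continuity, is self-contained and needs no external proposition. Your alternative remark, applying Theorem~\ref{thm:provisional from strictly proper 1} to $\twCRPS(\cdot,\cdot;w)$ itself on distributions supported in $[0,\tau]$, is the paper's route with a simpler strictly proper score, and it makes the extension unnecessary. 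This is a real simplification, because the extension must use a positive value beyond $\tau$. The paper's literal choice $w_1(s)=w(\tau)$ for $s>\tau$ is $0$ under the hypotheses, so it does not give an injective chaining function.

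\textbf{Part (ii).} The paper proves only the case $w=\one\{0\le s<\tau\}$. It writes $[F]_\tau$ as a density with respect to $\lambda|_{[0,\tau)}+\delta_\tau$ and applies Theorem~\ref{thm:provisional from strictly proper 1} to the log score relative to that measure. General $w$ is deferred to \citet{de2025localizing} as needing heavier machinery. Your auxiliary measure $\mu_F$ equals $[F]_\tau$ in the indicator case, if you put the extra atom at $\tau$. So your argument contains the paper's, and it shows the general case needs nothing beyond Gibbs' inequality. The paper's route conveys that both scores are localizations of strictly proper scores; yours gives an elementary proof for every admissible $w$.

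You should replace the hedging in (ii) with the computation, which takes one line. No constant $c$ is needed: the competitor is $\mu_G$, with density $wg$ on $[0,\tau)$ and atom $1-p_G$. Both $\mu_F$ and $\mu_G$ are probability measures simply because $0\le w\le 1$ gives $0\le p_F,p_G\le 1$. Since $w>0$ on $[0,\tau)$, we have $\ln(wf/wg)=\ln(f/g)$ there, so your displayed formula gives
\[
\EEE_F\twLogS(G,[T]_\tau;w)-\EEE_F\twLogS(F,[T]_\tau;w)=\int_0^\tau wf\ln\frac{f}{g}\,\dd s+(1-p_F)\ln\frac{1-p_F}{1-p_G}=\mathrm{KL}(\mu_F\,\|\,\mu_G)
\]
whenever $\EEE_F\twLogS(F,[T]_\tau;w)$ is finite. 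This is the usual proviso for the log score, and the paper leaves it tacit too. Gibbs' inequality and its equality case $\mu_F=\mu_G$ then give local propriety and $f=g$ almost everywhere on $[0,\tau)$, exactly as you conclude. No appeal to \citet{diks2011likelihood} or \citet{de2025localizing} is needed.
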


Theorem~\ref{thm:provisional from strictly proper 1} can also be applied to a wide range of other strictly proper scoring rules. See \cite{de2025localizing} for applications in the more general setting of strictly locally proper scoring rules.

Mean $\twCRPS_\tau$ and $\twLogS_\tau$ results for the synthetic experiment when $\tau$ is 6 and 12 are presented in the third to sixth rows of Table~\ref{tab:provisional results}, with rankings that are consistent with (\ref{eq:rankings}). Efficient computation of $\twCRPS_\tau$ is enabled by a closed-form expression for $\twCRPS_\tau(F,t)$ when $F$ is a gamma distribution. This is readily derived by subtracting the closed formula for the CRPS of the left-censored $F$ \citep[Eq~10]{scheuerer2015statistical} from the closed formula for the CRPS of $F$ \citep{scheuerer2015probabilistic}.

Of interest is that $\twCRPS_\tau$ ranks ModInfoMuli behind OptimisticOmar when $\tau=6$, but that this pairwise ranking is reversed when $\tau=12$. 
Insight into how $\twCRPS_\tau$ rankings depend on the censoring time $\tau$ can be gained by plotting the integrand of the CRPS in (\ref{eq:CRPS1}) against the time variable $s$, a diagnostic approach dating back to \cite{schumacher2003assess}. The integrand $(\one\{s\geq t\} - F(s))^2$ is the squared difference between the predictive probability $F(s)$ that the time-to-event does not exceed time $s$ and the binary realization $\one\{s\geq t\}$ indicating whether $s \geq t$. This squared difference is the Brier score \citep{brier1950verification}, a strictly proper scoring rule for probabilistic predictions with binary outcomes. Figure~\ref{fig:murphy synthetic}a plots the mean Brier score with event threshold $s$ against $s$, where one may interpret $s$ as a user decision threshold. A lower mean Brier score for threshold $s$ indicates better predictive performance for users with that threshold averaged across all forecast cases. The mean CRPS equals the area under the curve, while the mean $\twCRPS_\tau$ is the area under the curve on the interval $0\leq s \leq \tau$. As expected, HighInfoHannah performs best across all decision thresholds. Notably, OptimisticOmar outperforms ModInfoMuli for thresholds up to about 6.5 months, while ModInfoMuli dominates OptimisticOmar beyond 7 months. This occurs because OptimisticOmar leverages knowledge of $x_i + y_i$ and excels when $x_i + y_i \leq s$, whereas for larger $s$, ModInfoMuli's ideal forecasts based on $x_i$ outperform OptimisticOmar's systematic overpredictions. If only these two forecasters were available, OptimisticOmar's forecasts would be preferable for users with decision thresholds within the first 6 to 7 months, and ModInfoMuli's preferable beyond that horizon.

\begin{figure}[t!]
\centering
\includegraphics[width=0.75\textwidth]{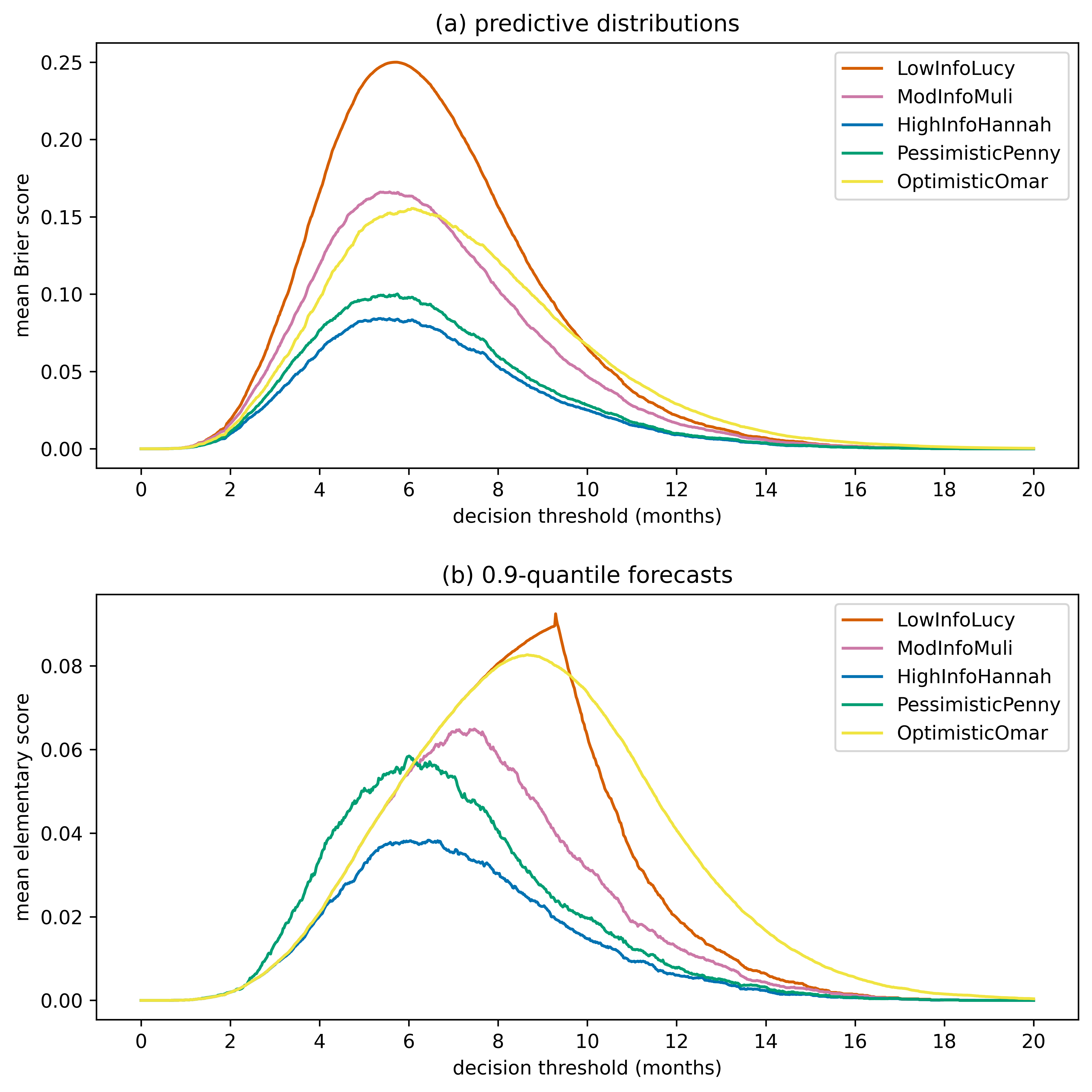}
\caption{(a) Brier score decomposition of CRPS for predictive distributions and (b) elementary score decomposition of quantile loss for 0.9-quantile forecasts, against decision threshold, for the forecasters from the synthetic experiment. For each decision threshold, a lower mean score is preferred.}
\label{fig:murphy synthetic}
\end{figure}

\subsection{Locally elicitable functionals}\label{ss:provisional scoring functions}

While the theory underlying the evaluation of time-to-event predictive distributions under fixed-horizon right-censoring, presented in Section~\ref{ss:provisional scoring rules}, is an application of existing scoring-rule theory, corresponding questions for functionals such as quantiles, prediction intervals and expectations under fixed-horizon right-censoring appear not to have been previously studied. The main theoretical results in this section, concerning local consistency and local elicitability under fixed-horizon right-censoring, therefore appear to be new.

\begin{defn}\label{def:provisionally strictly consistent}
Suppose that $k\geq1$ and that $U$ is a functional whose values are subsets of $I^k$. A scoring function $S$ is \textit{locally consistent} for the functional $U$ relative to the class $\mathcal{F}$ on the interval $[0,\tau)$ if
\begin{equation}\label{eq:censored consistency}
\EEE_F S(u,[T]_\tau) \leq \EEE_F S(x,[T]_\tau)
\end{equation}
for all probability distributions $F$ in $\mathcal{F}$, all $u$ in $U(F)$ and all $x$ in $I^k$. It is \textit{strictly locally consistent} for $U$ relative to $\mathcal{F}$ on $[0,\tau)$ if it is locally consistent and equality in (\ref{eq:censored consistency}) implies that $[x]_\tau\in [U(F)]_\tau$.
\end{defn}

Note that for strict local consistency, we do not insist that the only minimizers of the expected score are functional values; rather, that the minimizers, when right-censored, agree with right-censored functional values. Observing that equality of right-censored sets defines an equivalence relation, we have the following: if a scoring function is strictly locally consistent, then the only minimizers, `modulo right-censorship', of a forecaster's expected score are functional values from the predictive distribution that aligns with their true beliefs.

\begin{defn}\label{def:provisionally elicitable}
The functional $U$ is \textit{locally elicitable} relative to the class $\mathcal{F}$ on the interval $[0,\tau)$ if there exists a scoring function $S$ that is strictly locally consistent for $U$ relative to the class $\mathcal{F}$ on the interval $[0,\tau)$. It is \textit{completely locally elicitable} relative to the class $\mathcal{F}$ if it is both elicitable relative to the class $\mathcal{F}$ and locally elicitable relative to the class $\mathcal{F}$ on $[0,\tau)$ for every $\tau$ in the interior of $I$.
\end{defn}

The notion of complete local elicitability is important for the evaluation of long- or infinite-horizon time-to-event forecasts of rare events. It identifies functionals that admit coherent evaluation both under fixed-horizon right-censoring and when complete event-time information is available.

We begin with a positive result for quantile functionals, which draws on the threshold-weighted quantile scoring functions proposed by \citet{taggart2022evaluation}.

\begin{thm}\label{thm:quantile is provisionally elicitable}
Suppose that $0<\alpha<1$ and that $\tau$ belongs to the interior of $I$. Let $\mathcal{F}$ denote the class of probability distributions on the interval $I$.
\begin{enumerate}
\item[(a)] If the function $g:I\to\RRR$ is strictly increasing on the interval $[0,\tau]$, then the scoring function $S$, given by
\begin{equation}\label{eq:consistent provisional quantile sf}
S(x,t) = (\one\{[x]_\tau\geq [t]_\tau\} - \alpha)(g([x]_\tau)-g([t]_\tau)),
\end{equation}
is strictly locally consistent for the $\alpha$-quantile functional relative to $\mathcal{F}$ on $[0,\tau)$.
\item[(b)] The $\alpha$-quantile functional is completely locally elicitable with respect to $\mathcal{F}$.
\end{enumerate}
\end{thm}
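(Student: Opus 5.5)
Our approach to (a) is to reduce everything to the standard quantile consistency result (\ref{eq:consistent quantile sf}), applied to the censored variable $[T]_\tau$ and the censored forecast $[x]_\tau$. Fix $F\in\mathcal{F}$ and let $T\sim F$. Since $S(x,t)$ depends on $x$ and $t$ only through $[x]_\tau$ and $[t]_\tau$, we have $\EEE_F S(x,[T]_\tau)=\EEE_{F_\tau}\tilde S([x]_\tau, Y)$, where $Y=[T]_\tau$ has distribution $F_\tau:=[F]_\tau$ on $[0,\tau]$, and $\tilde S(y,t)=(\one\{y\geq t\}-\alpha)(g(y)-g(t))$ is restricted to $[0,\tau]^2$. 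Because $g$ is strictly increasing on $[0,\tau]$, it is bounded there, so $\EEE g(Y)$ is finite. The cited result (\citealp{gneiting2011quantiles}, applied on the interval $[0,\tau]$ with $g|_{[0,\tau]}$) then shows that $\tilde S$ is strictly consistent for the $\alpha$-quantile relative to all distributions on $[0,\tau]$. Equivalently, we can verify this directly. For $y<y'$ in $[0,\tau]$,
\[
\EEE\tilde S(y',Y)-\EEE\tilde S(y,Y)=\int_{[y,y')}\big(F_\tau(s)-\alpha\big)\,\dd g(s)\text{-type expression},
\]
or, more concretely, $(F_\tau(y)-\alpha)(g(y')-g(y))+\EEE[\one\{y<Y\leq y'\}(g(y')-g(Y))]$. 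This makes the sign analysis transparent.

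The key step is to identify the minimizers. The set of minimizers over $y\in[0,\tau]$ of $\EEE\tilde S(y,Y)$ is exactly $q_\alpha(F_\tau)\cap[0,\tau]$, the $\alpha$-quantiles of $F_\tau$. (Every quantile of $F_\tau$ lies in $[0,\tau]$, because $F_\tau(\tau)=1$.) We must then show that $q_\alpha(F_\tau)=[q_\alpha(F)]_\tau$. The argument splits on the position of the quantiles:
\begin{itemize}
\item If some $\alpha$-quantile of $F$ is $<\tau$, then $F$ and $F_\tau$ agree below $\tau$. The quantile set of $F$ is an interval $[a,b]$, and that of $F_\tau$ is $[a,\min(b,\tau)]$, which equals $[q_\alpha(F)]_\tau$.
\item If all $\alpha$-quantiles of $F$ are $\geq\tau$, then $F(s)<\alpha$ for $s<\tau$. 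Hence $q_\alpha(F_\tau)=\{\tau\}=[q_\alpha(F)]_\tau$.
\end{itemize}
The boundary case $a=\tau$ needs a moment's care with the left limit $F(\tau-)\leq\alpha$, and this is what I expect to be the main (if minor) obstacle: getting the quantile-set bookkeeping right at $\tau$, including atoms of $F$ at or above $\tau$.

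Granting this, consistency and strictness follow together. For $u\in q_\alpha(F)$, $[u]_\tau$ is a minimizer, so $\EEE_F S(u,[T]_\tau)\leq\EEE_F S(x,[T]_\tau)$ for all $x\in I$. Conversely, equality forces $[x]_\tau\in q_\alpha(F_\tau)=[q_\alpha(F)]_\tau$. There is one subtlety: the class $\mathcal{F}$ is all distributions on $I$, with no moment condition. This is harmless here, since the score is bounded on the censored range, so all expectations exist.

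For (b), local elicitability on $[0,\tau)$ for every interior $\tau$ follows immediately from (a), taking for instance $g(x)=x$, which gives the threshold-weighted quantile loss. For elicitability relative to the full class $\mathcal{F}$ (no moment assumptions), we cannot use $\QL_\alpha$ directly. Instead we take (\ref{eq:consistent quantile sf}) with a bounded strictly increasing $g$ on $I$, e.g.\ $g(x)=x/(1+x)$ or $\arctan x$. Then $\EEE_F g(T)$ is finite for every $F$, and the cited strict consistency result applies relative to all of $\mathcal{F}$.
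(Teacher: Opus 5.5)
Your proposal is correct and follows essentially the same route as the paper. The paper also moves the expectation onto $[F]_\tau$ (Lemma~\ref{lem:censor}), invokes strict consistency of the $g$-quantile score on the compact interval $[0,\tau]$, and obtains $U([F]_\tau)=[U(F)]_\tau$ through two inclusions (Lemma~\ref{lem:quantile censoring}), where you use a case split on the lower quantile. Your explicit choice of a bounded strictly increasing $g$ in part (b) justifies elicitability over the moment-free class $\mathcal{F}$ more carefully than the paper's bare citation of Gneiting's Theorem~9.
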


Theorem~\ref{thm:quantile is provisionally elicitable}(a) gives methodological justification for evaluating an $\alpha$-quantile forecast $x$ against a right-censored observation $[t]_\tau$ to generate the score $S(x,[t]_\tau)$, where $S$ is defined by (\ref{eq:consistent provisional quantile sf}). The proof of the theorem is in Appendix~\ref{a:proofs} and has two core ideas. First, if $g$ is strictly increasing on $[0,\tau]$ then so too is the function $s\mapsto g([s]_\tau)$, and existing results about the scoring function (\ref{eq:consistent quantile sf}) can be applied. Second, when $U$ is a quantile functional, right-censoring and functional evaluation of distributions are interchangeable in the sense that $U([F]_\tau)=[U(F)]_\tau$.

By taking $g(s)=s$ as a special case of (\ref{eq:consistent provisional quantile sf}), one obtains the threshold-weighted quantile loss $\twQL_{\alpha,\tau}$, given by
\begin{equation}\label{eq:twCL}
\twQL_{\alpha,\tau}(x,t) = (\one\{[x]_\tau\geq [t]_\tau\} - \alpha)([x]_\tau-[t]_\tau).
\end{equation}
Mean scores $\overline{\twQL}_{0.9,\tau}$ of 0.9-quantile forecasts from the synthetic experiment are shown in Table~\ref{tab:provisional results} when evaluated against right-censored realizations for right-censoring time $\tau$. Ranks based on these mean scores agree with (\ref{eq:rankings}), with some qualification regarding the pairwise ranking of LowInfoLucy and ModInfoMuli when $\tau=6$. LowInfoLucy's mean score is slightly greater than ModInfoMuli's, but the difference is negligible ($1.04\times10^{-4}$) and the result is not statistically significant over 10,000 forecast cases. This occurs because $\twQL_{\alpha,\tau}(x,t)$ depends only on the right-censored forecast $[x]_\tau$ and realization $[t]_\tau$, and when $\tau=6$ LowInfoLucy's and ModInfoMuli's right-censored 0.9-quantile forecasts take the same value of 6 in about 97\% of forecast cases. Consequently, their score differential is zero for most cases, making significance difficult to achieve with a sample size of 10,000. Statistical significance is attained when the experiment is repeated over 500,000 cases, with ModInfoMuli ranked better. This issue does not arise for 0.1-quantile forecasts, which frequently differ even when $\tau=6$. The Murphy diagram, discussed next, provides further insight.

As with $\twCRPS$, relative rankings of misspecified forecast procedures under strictly locally consistent scoring functions can vary with the censoring time $\tau$. The Murphy diagram, which is a plot of mean elementary score against decision threshold $\theta$, is a useful diagnostic for understanding this. A Murphy diagram for 0.9-quantile forecasts from the synthetic experiment is presented in Figure~\ref{fig:murphy synthetic}b.
The theory behind Murphy diagrams for quantile forecasts, and an interpretation using simple optimal user decision models, was developed by \citet{ehm2016quantiles}. We summarize a few key points below.
\begin{itemize}
\item The elementary score $\ES_{\alpha,\theta}$ for an $\alpha$-quantile and decision threshold $\theta$ in $I$ is given by
\[
\ES_{\alpha,\theta}(x,t)=
\begin{cases}
1-\alpha & \text{if } t\leq\theta<x \\
\alpha & \text{if } x\leq\theta<t \\
0 & \text{otherwise,}
\end{cases}
\]
where $x$ is the real-valued forecast and $t$ is the time-to-event realization. Thus $\ES_{\alpha,\theta}$ only assigns a positive penalty for cases where the forecast and realization fall on opposite sides of $\theta$. Conveniently, $\ES_{\alpha,\theta}(x,t)=\ES_{\alpha,\theta}(x,[t]_\tau)$ whenever $\theta \leq \tau$, facilitating the calculation of elementary scores using right-censored realizations.
\item Suppose that, following the classical simple cost--loss user decision model \citep{richardson2012economic}, a forecast user follows an optimal decision rule to act if the $\alpha$-quantile forecast $x$ exceeds a particular decision threshold $\theta$, and to take no action otherwise. (E.g., a maintenance engineer may decide to replace a component if and only if the median time-to-failure forecast for the component is no greater than 4 years.) If forecaster A has a lower mean elementary score $\overline{\ES}_{\alpha,\theta}$ than forecaster B for that decision threshold, then over that set of forecast cases the user would have made better decisions using the quantile forecasts of forecaster A than those of forecaster B.
\item The area under a forecaster's Murphy diagram curve equals their mean quantile loss:
\[\overline{\QL}_\alpha = \int_I \overline{\ES}_{\alpha,\theta}\,\dd\theta.\]
\item The area under a forecaster's Murphy diagram curve from decision threshold $0$ to $\tau$ equals their mean threshold-weighted quantile loss:
\[\overline{\twQL}_{\alpha,\tau} = \int_0^\tau \overline{\ES}_{\alpha,\theta}\,\dd\theta.\]
\item The previous two points are special cases of the following. Suppose that $S$ is the strictly locally consistent scoring function of Equation~(\ref{eq:consistent provisional quantile sf}) with corresponding increasing function $g$. If $g$ is differentiable with derivative $g'$, then the mean score $\overline{S}$ equals the threshold-weighted area under the Murphy curve from decision threshold 0 to $\tau$, where the weight for decision threshold $\theta$ is $g'(\theta)$:
\[\overline{S} = \int_0^\tau \overline{\ES}_{\alpha,\theta}\,g'(\theta)\,\dd\theta.\]
\end{itemize}
Using Figure~\ref{fig:murphy synthetic}b, OptimisticOmar outperforms PessimisticPenny for users with decision thresholds up to about 6 months, while PessimisticPenny performs better beyond 7 months. When averaging, with equal weight, performance across all decision thresholds up to 6 months, OptimisticOmar performs better on average than PessimisticPenny (Table~\ref{tab:provisional results} row 7), whereas when averaging across all decision thresholds up to 12 months PessimisticPenny performs better (Table~\ref{tab:provisional results} row 8). The Murphy curves for LowInfoLucy and ModInfoMuli are nearly indistinguishable for thresholds up to 6 months, consistent with their similar mean for $\twQL_{0.9,6}$. Only for thresholds exceeding 6 months does the performance gap emerge, as their right-censored 0.9-quantile forecasts begin to differ more frequently.

The discussion on Figure~\ref{fig:murphy synthetic} gives a decision-theoretic interpretation of local evaluation on the time interval $[0,\tau)$ when using threshold-weighted CRPS and quantile loss. The mean threshold-weighted scores are an aggregate measure of forecast accuracy for forecast users who have time-to-event decision thresholds ranging from $0$ to $\tau$. They do not measure accuracy for users whose decision thresholds exceed $\tau$.

Since a vector-valued functional constructed from real-valued locally elicitable functionals is locally elicitable (see Lemma~\ref{lem:vector valued functional} in Appendix~\ref{a:proofs}), the central prediction interval functional is also completely locally elicitable.

\begin{thm}\label{thm:prediction interval is provisionally elicitable}
Suppose that $0<\alpha<1$ and that $\tau$ belongs to the interior of $I$. Let $\mathcal{F}$ denote the class of probability distributions on the interval $I$.
\begin{enumerate}
\item[(a)] The threshold-weighted interval scoring function $\twIS_{\alpha,\tau}:I^2\times I\to\RRR$, given by
\begin{equation}\label{eq:twIS2}
\twIS_{\alpha,\tau}((x_1,x_2), t)
= \frac{\alpha}{2}([x_2]_\tau-[x_1]_\tau) + \one\{[x_1]_\tau>[t]_\tau\}\big([x_1]_\tau-[t]_\tau)+ \one\{[x_2]_\tau<[t]_\tau\}([t]_\tau-[x_2]_\tau)\,,
\end{equation}
is strictly locally consistent for the $(1-\alpha)\times100\%$ central prediction interval relative to $\mathcal{F}$ on $[0,\tau)$.
\item[(b)] The $(1-\alpha)\times100\%$ central prediction interval is completely locally elicitable with respect to $\mathcal{F}$.
\end{enumerate}
\end{thm}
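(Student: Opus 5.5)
We reduce $\twIS_{\alpha,\tau}$ to a sum of two threshold-weighted quantile losses and then apply Theorem~\ref{thm:quantile is provisionally elicitable} together with Lemma~\ref{lem:vector valued functional}.

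\textbf{Step 1 (decomposition).} Write $a=[x_1]_\tau$, $b=[x_2]_\tau$ and $c=[t]_\tau$. Expanding the definition (\ref{eq:twCL}) for each endpoint gives
\[
\twQL_{\alpha/2,\tau}(x_1,t)+\twQL_{1-\alpha/2,\tau}(x_2,t)
=(\one\{a\geq c\}-\tfrac{\alpha}{2})(a-c)+(\one\{b\geq c\}-1+\tfrac{\alpha}{2})(b-c).
\]
Collecting terms, the right-hand side equals
\[
\tfrac{\alpha}{2}(b-a)+\one\{a\geq c\}(a-c)+\one\{b<c\}(c-b),
\]
using $\one\{b\geq c\}-1=-\one\{b<c\}$. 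Since $\one\{a\geq c\}(a-c)=\one\{a>c\}(a-c)$, this is exactly (\ref{eq:twIS2}). This is the same algebra that yields (\ref{eq:IS clear form}), now applied to censored arguments.

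\textbf{Step 2 (local consistency of each summand).} Take $g(s)=s$ in Theorem~\ref{thm:quantile is provisionally elicitable}(a). Then $\twQL_{\alpha/2,\tau}$ is strictly locally consistent for the $\tfrac{\alpha}{2}$-quantile, and $\twQL_{1-\alpha/2,\tau}$ is strictly locally consistent for the $(1-\tfrac{\alpha}{2})$-quantile, both relative to $\mathcal{F}$ on $[0,\tau)$. All expectations are finite because the censored arguments lie in the bounded interval $[0,\tau]$. Consequently no moment condition is needed and $\mathcal{F}$ can be the class of all distributions on $I$.

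\textbf{Step 3 (combining the summands).} The expected score separates as
\[
\EEE_F\,\twIS_{\alpha,\tau}((x_1,x_2),[T]_\tau)=\EEE_F\,\twQL_{\alpha/2,\tau}(x_1,[T]_\tau)+\EEE_F\,\twQL_{1-\alpha/2,\tau}(x_2,[T]_\tau),
\]
since $[[T]_\tau]_\tau=[T]_\tau$. Each term is minimized separately, so any $(u_1,u_2)$ in the interval functional $U(F)=U_1(F)\times U_2(F)$ minimizes the sum; this gives local consistency. For strictness, suppose $(x_1,x_2)$ attains the minimal value. Because each term is individually at least its own minimum, equality of the sum forces equality in both terms. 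Strict local consistency of each summand then gives $[x_1]_\tau\in[U_1(F)]_\tau$ and $[x_2]_\tau\in[U_2(F)]_\tau$, hence $[(x_1,x_2)]_\tau\in[U(F)]_\tau$.

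One point needs care here. If the interval functional is defined as the set of pairs of quantiles with no ordering constraint, then $[U(F)]_\tau=[U_1(F)]_\tau\times[U_2(F)]_\tau$ componentwise and the conclusion follows directly. If an ordering $x_1\leq x_2$ is imposed, it is automatic: for $\alpha<1$ every $\tfrac{\alpha}{2}$-quantile is at most every $(1-\tfrac{\alpha}{2})$-quantile. This is presumably the content of Lemma~\ref{lem:vector valued functional}, which I would cite instead of repeating the argument.

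\textbf{Part (b).} Part (a) gives local elicitability for every $\tau$ in the interior of $I$. For ordinary elicitability we still need a strictly consistent scoring function relative to the class of \emph{all} distributions on $I$, including those without a finite first moment. $\IS_\alpha$ does not qualify, since it requires a finite first moment. Instead, take a bounded strictly increasing $g$ on $I$, such as $g(s)=s/(1+s)$, and use
\[
S((x_1,x_2),t)=S_{g,\alpha/2}(x_1,t)+S_{g,1-\alpha/2}(x_2,t),
\]
where each summand is the scoring function (\ref{eq:consistent quantile sf}) with this $g$. Each summand is strictly consistent for the corresponding quantile relative to all distributions, since $\EEE_F g(T)$ is always finite. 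The same separation argument as in Step 3 then gives strict consistency for the interval.

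The main obstacle is Step 3's set bookkeeping: checking that the censored functional set factorizes as a product, $[U(F)]_\tau=[U_1(F)]_\tau\times[U_2(F)]_\tau$, when quantiles are set-valued.
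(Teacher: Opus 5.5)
Your proposal is correct and follows the paper's proof. You decompose $\twIS_{\alpha,\tau}$ as $\twQL_{\alpha/2,\tau}+\twQL_{1-\alpha/2,\tau}$, apply Theorem~\ref{thm:quantile is provisionally elicitable}(a) with $g(s)=s$ to each summand, and combine them with the separation argument of Lemma~\ref{lem:vector valued functional}. Your explicit construction for the ordinary-elicitability half of (b), using a bounded strictly increasing $g$ such as $s/(1+s)$, fills in a step the paper leaves implicit; and the product-set bookkeeping you flag as the main obstacle is immediate, because $[\,\cdot\,]_\tau$ acts componentwise.
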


The scoring function $\twIS_{\alpha,\tau}$, which appears to be novel, equals the sum of $\twQL_{\alpha_1,\tau}$ and $\twQL_{\alpha_2,\tau}$. The representation (\ref{eq:twIS2}) shows that the threshold-weighted interval score decomposes into the distance from the right-censored observation to the right-censored prediction interval plus a penalty proportional to the  width of the right-censored prediction interval. When $\alpha=0.5$ the central prediction interval becomes the IQI. Mean scores $\overline{\twIS}_{0.5,\tau}$ for IQI predictions from the synthetic experiment appear in the final two rows of Table~\ref{tab:provisional results}, yielding rankings consistent with (\ref{eq:rankings}).
Theorem~\ref{thm:prediction interval is provisionally elicitable} is a special case of a more general result that prediction intervals of the form $(x_1,x_2)$ where $x_1$ is an $\alpha_1$-quantile and $x_2$ is an $\alpha_2$-quantile ($0<\alpha_1<\alpha_2<1$) are completely locally elicitable, via scoring functions that are appropriate linear combinations of (\ref{eq:consistent provisional quantile sf}).

The next theorem gives general conditions under which local elicitability on $[0,\tau)$ fails. We use it to establish a negative result: the mean (i.e., expected value) functional is not locally elicitable.

\begin{thm}\label{thm:not prov elic}
Suppose that $\tau$ belongs to the interior of $I$, $\mathcal{F}$ is a class of probability distributions on $I$ and $U:\mathcal{F}\to\mathcal{P}(I)$ is a functional on $\mathcal{F}$. If $\mathcal{F}$ contains two distributions $F_1$ and $F_2$ such that
\begin{enumerate}
\item[(a)] $U(F_1) \subset(0,\tau)$ and $U(F_2) \subset(0,\tau)$ 
\item[(b)] $U(F_1) \neq U(F_2)$ and
\item[(c)] $[F_1]_\tau = [F_2]_\tau$,
\end{enumerate}
then $U$ is not locally elicitable with respect to $\mathcal{F}$ on $[0,\tau)$.
\end{thm}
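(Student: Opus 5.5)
The argument is by contradiction. Suppose some scoring function $S$ is strictly locally consistent for $U$ relative to $\mathcal{F}$ on $[0,\tau)$. The key observation is that the law of the censored variable $[T]_\tau$ under $F$ depends only on $[F]_\tau$. Indeed, for $s<\tau$ we have $\PPP_F([T]_\tau\le s)=F(s)=[F]_\tau(s)$, and for $s\ge\tau$ both sides equal $1$. So, by hypothesis (c), $[T]_\tau$ has the same distribution under $F_1$ as under $F_2$. It follows that, for every $x\in I$,
\[
\EEE_{F_1} S(x,[T]_\tau)=\EEE_{F_2} S(x,[T]_\tau),
\]
whenever either side exists. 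In particular, the expected-score functions $x\mapsto \EEE_{F_j}S(x,[T]_\tau)$ coincide for $j=1,2$, and therefore so do their sets of minimizers.

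Next I use (b). Without loss of generality there is some $u\in U(F_1)\setminus U(F_2)$; otherwise $U(F_1)\subsetneq U(F_2)$, and I swap the roles of $F_1$ and $F_2$. Local consistency applied to $F_1$ shows that $u$ minimizes $x\mapsto\EEE_{F_1}S(x,[T]_\tau)$ over $I$. This function is also the expected score under $F_2$, so $u$ minimizes the $F_2$ expected score as well. Since $U(F_2)$ is nonempty (it is a set of functional values, and I assume functionals take nonempty values, as implicit in Definition~\ref{def:provisionally strictly consistent}), pick any $u_2\in U(F_2)$. Then $\EEE_{F_2}S(u_2,[T]_\tau)\le \EEE_{F_2}S(u,[T]_\tau)$ by consistency, and the reverse inequality holds by the minimality of $u$. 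So equality holds in (\ref{eq:censored consistency}) for $F_2$ with $x=u$. Strict local consistency then forces $[u]_\tau\in[U(F_2)]_\tau$.

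Hypothesis (a) is what turns this into a contradiction. Because $u\in U(F_1)\subset(0,\tau)$, we have $[u]_\tau=u$. Because $U(F_2)\subset(0,\tau)$, censoring acts as the identity on $U(F_2)$, so $[U(F_2)]_\tau=U(F_2)$. Hence $u\in U(F_2)$, contradicting the choice of $u$. Therefore no strictly locally consistent $S$ exists, and $U$ is not locally elicitable on $[0,\tau)$.

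The main point needing care is the first step: that expectations of functions of $[T]_\tau$ depend on $F$ only through $[F]_\tau$. I would justify this by noting that $[T]_\tau$ is a measurable function of $T$, and that its distribution is the pushforward measure whose CDF is exactly $[F]_\tau$. A secondary subtlety is the existence or finiteness of the expectations in the definition. I would handle it by observing that existence is itself determined by the common censored law, so it holds for $F_1$ if and only if it holds for $F_2$.
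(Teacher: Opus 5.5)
Your proof is correct and follows the paper's argument. The paper likewise uses Lemma~\ref{lem:censor} (your pushforward observation) together with (c) to show that the two expected-score functions coincide, and then uses (a) to identify the two minimizer sets with $U(F_1)$ and $U(F_2)$, contradicting (b); your element-wise argument with $u\in U(F_1)\setminus U(F_2)$ simply unpacks that step, and your explicit assumption that $U(F_2)$ is nonempty is tacitly needed in the paper's claim that the minimizer set is precisely $U(F_i)$, since otherwise the strict consistency condition for that distribution is vacuous.
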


\begin{cor}\label{cor:expectation not provisionally elicitable}
Suppose that $\tau$ is in the interior of $I$ and that $m$ is a positive integer. The mean functional $U$ is not locally elicitable on $[0,\tau)$ relative to any class $\mathcal{F}$ of probability distributions on $I$ that contains either (i) the distributions on $I$ with finite support, or (ii) the distributions on $I$ with a probability density function and finite $m$th moment.
\end{cor}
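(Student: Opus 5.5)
The plan is to apply Theorem~\ref{thm:not prov elic} directly. In each of cases (i) and (ii) I will exhibit two distributions $F_1,F_2$ in the relevant class that agree on $[0,\tau)$ but have different means, both lying in $(0,\tau)$. Since the mean functional is single-valued, condition (b) reduces to $\EEE_{F_1}T\neq\EEE_{F_2}T$, and condition (a) reduces to $0<\EEE_{F_j}T<\tau$. Condition (c), $[F_1]_\tau=[F_2]_\tau$, holds exactly when the two CDFs coincide on $[0,\tau)$, equivalently when the laws restricted to $[0,\tau)$ agree (so the mass beyond $\tau$ is the same as well). The task is therefore to place a small amount of mass $\varepsilon$ at or beyond $\tau$ in two different ways, keeping the means below $\tau$.

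For case (i), fix $\varepsilon\in(0,1)$ and points $0<a<\tau$ and $\tau\leq b_1<b_2$ in $I$. Such points exist because $\tau$ lies in the interior of $I$; for instance, take $b_1=\tau$ and any $b_2>\tau$ in $I$. Define $F_j=(1-\varepsilon)\delta_a+\varepsilon\delta_{b_j}$. Both have finite support and the same censored CDF, and their means are $(1-\varepsilon)a+\varepsilon b_j$, which differ. Both means are positive, and choosing $\varepsilon$ small enough (say $\varepsilon(b_2-a)<\tau-a$) forces both below $\tau$. Theorem~\ref{thm:not prov elic} then gives the conclusion for any class containing the finitely supported distributions.

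For case (ii), I will use densities instead. Take $f_j=(1-\varepsilon)u_{[a_0,a_1]}+\varepsilon\,u_{J_j}$, where $u_J$ is the uniform density on an interval $J$. Here $[a_0,a_1]\subset(0,\tau)$, and $J_1,J_2$ are distinct bounded subintervals of $I\cap[\tau,\infty)$ with different midpoints, for example $[\tau,\tau+\eta]$ and $[\tau+\eta,\tau+2\eta]$ with $\eta>0$ small enough that both lie in $I$. These densities are bounded with compact support, so all moments are finite, in particular the $m$th. They coincide on $[0,\tau)$, so $[F_1]_\tau=[F_2]_\tau$. Their means $(1-\varepsilon)\tfrac{a_0+a_1}{2}+\varepsilon\,\mathrm{mid}(J_j)$ are distinct and positive, and they lie below $\tau$ once $\varepsilon$ is small.

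The only real care point is ensuring that the extra mass sits in $I$ beyond $\tau$. This is exactly where the hypothesis that $\tau$ is interior to $I$ is used, and it also covers the case $I=[0,b]$ or $[0,b)$. Everything else is direct verification of hypotheses (a)–(c) of Theorem~\ref{thm:not prov elic}.
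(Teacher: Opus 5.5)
Your proposal is correct and follows the paper's route: both arguments apply Theorem~\ref{thm:not prov elic} to a pair of distributions that coincide on $[0,\tau)$ but place the remaining mass differently beyond $\tau$, with both means kept in $(0,\tau)$. The difference is in the construction. The paper uses one absolutely continuous pair for both cases: uniform on $[0,a]$ with $\tau<a<2\tau$, versus the same density on $[0,\tau]$ with the mass on $(\tau,a]$ reshaped into a decreasing triangle. That choice implicitly reads ``finite support'' as bounded support, whereas your separate two-point construction for (i) also covers the standard reading of finitely many support points.
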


The key reason for this negative result is that, unlike the quantile functional, right-censoring and functional evaluation are not interchangeable for the mean functional. For the same reason, Theorem~\ref{thm:not prov elic} can be used to show that expectiles, Huber functionals and the midpoint of modal intervals are not locally elicitable, even though they are elicitable \citep{gneiting2011making, taggart2022point, ferguson1967mathematical, lehmann1998theory}. Extending the elicitation theory presented here to more general localization schemes, such as those considered by \citet{de2025localizing}, appears to be a natural direction for future work.

\subsection{Dealing with right-censored forecasts}\label{ss:right-censored forecasts}

Sections~\ref{ss:provisional scoring rules} and \ref{ss:provisional scoring functions} addressed forecast evaluation using right-censored realizations. We now consider the case where forecasts themselves are right-censored. This situation arises naturally whenever a forecasting system has a finite prediction horizon $\tau$. In such cases, the exact forecast value $x$ may be unknown, but its right-censored counterpart $[x]_\tau$ remains known.

Fortunately, the strictly locally proper scoring rules and strictly locally consistent scoring functions of Sections~\ref{ss:provisional scoring rules} and \ref{ss:provisional scoring functions} are invariant with respect to right-censorship of the forecast. That is, the scoring function $S_\tau$ of Theorem~\ref{thm:provisional from strictly proper 1}, the threshold-weighted CRPS (\ref{eq:twcrps}, \ref{eq:twcrps specific}) and threshold-weighted logarithmic score (\ref{eq:twLogS}, \ref{eq:twlogs special case}) can be calculated if the values $F(s)$ of the predictive distribution are only known for $0 \leq s < \tau$, so that $[F]_\tau$ is known even if $F$ is unknown. Similarly, the values of the strictly locally consistent scores (\ref{eq:consistent provisional quantile sf}, \ref{eq:twCL}, \ref{eq:twIS2}) for quantile and interval forecasts are unchanged if the forecast $x$ is replaced with $[x]_\tau$. The CRPS decomposition and Murphy diagram diagnostic tools illustrated in Figure~\ref{fig:murphy synthetic} are also invariant with respect to right-censorship of the forecast, provided that one restricts the diagrams to decision thresholds from $0$ to $\tau$.

In summary, right-censored CDF, quantile and interval forecasts can be evaluated using the threshold-weighted scores introduced earlier. As illustrated in Section~\ref{s:real examples}, such forecasts can also be derived from ensembles of time-series forecasts with finite prediction horizons. However, right-censored mean (expected value) forecasts cannot be derived from an ensemble if the event fails to occur in any member within the model's prediction horizon.

Right-censorship of forecasts at time $\tau$ can also be used in situations where the predicted probability that the event never occurs is positive, so that the outcome space is the extended half real line $[0,\infty]$ and $\PPP(T=\infty)>0$, where $T=\infty$ denotes that the event never occurs. In such cases, a predictive distribution $F$, defined by $F(s)=\PPP(T\leq s)$ whenever $s\in[0,\infty]$, has right-censored counterparts $[F]_\tau$ that can be used for evaluation as discussed above.

\section{Examples from hydrological and weather prediction}\label{s:real examples}

\subsection{First passage time forecasts for flood}\label{ss:example flood}

Forecasting the first time a river exceeds a critical flood level, relative to model initialization time, is an example of a first passage time forecast. To illustrate evaluation methods, we consider river height predictions for eight locations in the Nepean--Hawkesbury River catchment, Australia. Forecasts are generated by an ensemble prediction system (EPS) comprising $m$ precipitation forecasts converted via a hydrological model into $m$ river height forecasts at each location. We compare two EPSs used by the Australian Bureau of Meteorology, both issuing hourly river height forecasts to a 168-hour (7-day) horizon using the same hydrological model. Issuance times are every 3 hours when there is a flood risk and daily otherwise, with 665 issuance times spanning 9~May 2024 to 31~July 2025. EPS~A has 12 ensemble members with precipitation taken from 12 different numerical weather prediction models. EPS~B has 32 members using stochastic precipitation forecasts for the short term, transitioning to numerical weather prediction thereafter. Figure~\ref{fig:hnv hydrograph}a,b shows ensemble river height forecasts (gray lines) for North Richmond initialized at 6~June~2024, observed heights (black line), and critical flood thresholds (horizontal lines).

\begin{figure}[t!]
\centering
\includegraphics[width=0.75\textwidth]{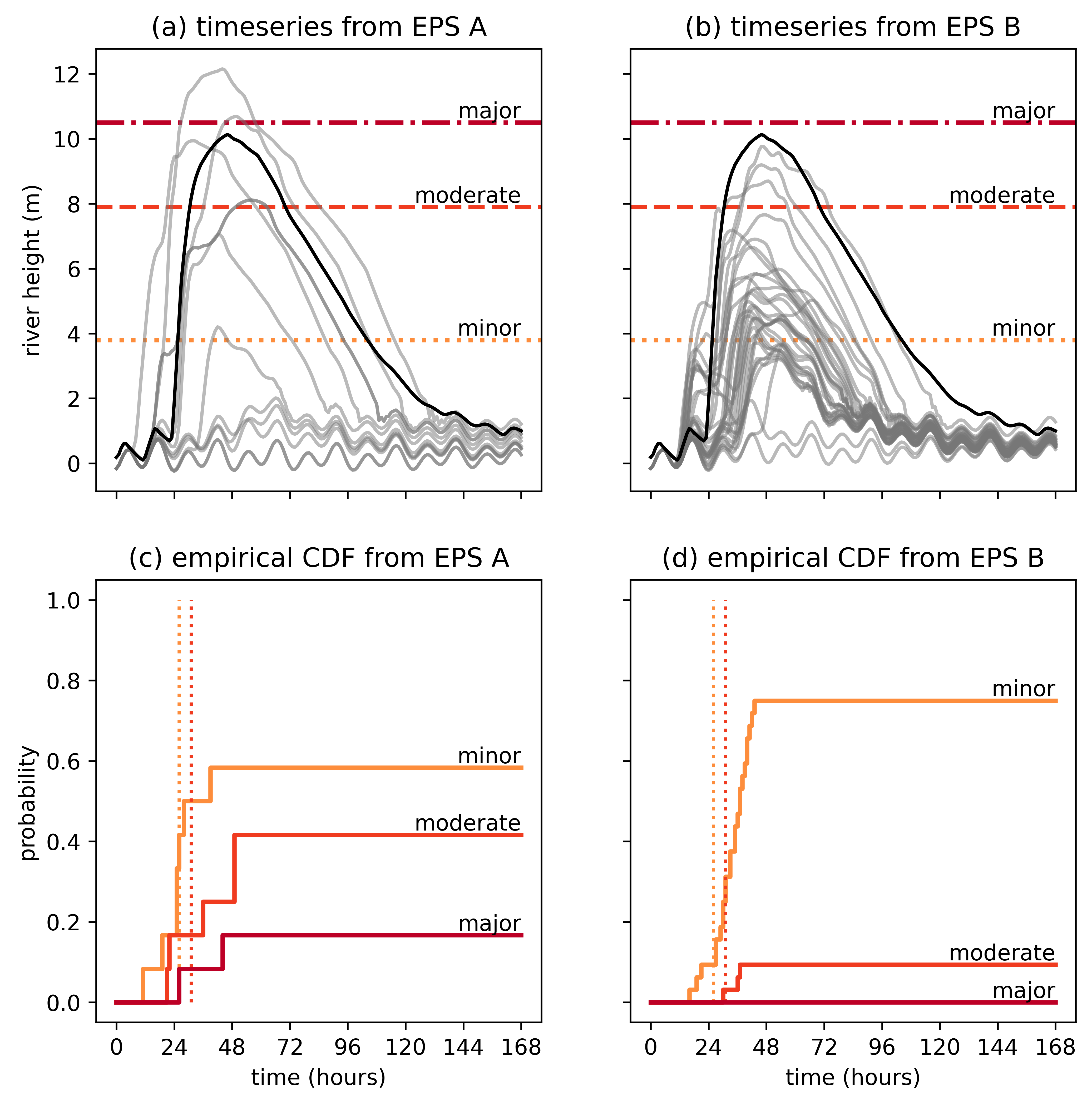}
\caption{River height forecasts (gray) for North Richmond from (a) EPS~A and (b) EPS~B with observed heights (black lines). Horizontal lines mark thresholds for minor, moderate, and major flooding. Empirical CDFs for first passage time for (c) EPS~A and (d) EPS~B for the minor, moderate and major flood thresholds. Vertical dotted lines indicate realized first passage times for minor and moderate thresholds; the major threshold was not exceeded. Time~0 corresponds to model initialization at 00:00~UTC on 6~June~2024.}
\label{fig:hnv hydrograph}
\end{figure}

For each EPS, time-series forecasts are converted to a partial empirical CDF $F$ for the first time to exceed height $h$, where $h$ is the minor, moderate, or major flood threshold. The CDF $F$ is partial because the probability $F(s)$ that the first passage time does not exceed time $s$ is only known when $s \leq 168$ hours. Figure~\ref{fig:hnv hydrograph}c,d shows these CDFs for North Richmond.

Right-censoring is prominent: in over 81\% of the 665 issuance times, the realized first passage time for the minor threshold exceeded the prediction horizon. In more than 87\% of cases, a complete empirical CDF from EPS~A could not be constructed because some ensemble members remained below the minor threshold throughout the horizon. Statistics for EPS~B are similar.

First passage time forecasts, interpreted as distributions, can be evaluated using the threshold-weighted CRPS (\ref{eq:twcrps chaining specific}). Using ensemble member values to approximate expectations in (\ref{eq:twcrps chaining specific}) gives
\begin{equation}\label{eq:twcrps ensemble}
\twCRPS_\tau(F,t) \approx \frac{1}{m}\sum_{i=1}^m \left|[x_i]_\tau-[t]_\tau\right| - \frac{1}{2m c_m}\sum_{i=1}^m\sum_{j=1}^m \left|[x_i]_\tau - [x_j]_\tau\right|,
\end{equation}
where $x_i$ is the time-to-event forecast of the $i$th ensemble member and $c_m$ equals either $m$ or $m-1$. If ensemble values are viewed as a random sample from an unspecified predictive distribution $F$, then the first term in (\ref{eq:twcrps ensemble}) is an unbiased estimator of the first expectation in (\ref{eq:twcrps chaining specific}), and the second term with $c_m=m-1$ is unbiased for the second expectation \citep{ferro2014fair}. Alternatively, if $F$ is interpreted as the empirical CDF of ensemble values, setting $c_m=m$ makes (\ref{eq:twcrps ensemble}) exact. In both cases, only knowledge of right-censored forecast values and the right-censored realization are required for score evaluation.

Table~\ref{tab:twcrps hydro results} shows mean threshold-weighted CRPS for EPS~A and EPS~B across all forecast cases at each flood threshold, using $\tau=168$ and $c_m=m-1$. EPS~A performed best on average. To assess significance, we compute for each threshold, system, and issuance time the mean score across eight locations (to reduce spatial dependence), yielding 665 score differentials between systems. The null hypothesis of equal predictive performance is rejected at the 5\% level in favor of EPS~A using the \cite{hering2011comparing} test statistic, which accounts for autocorrelation and is asymptotically standard normal under the null hypothesis \citep{diebold2002comparing}. Corresponding p-values appear in Table~\ref{tab:twcrps hydro results}.

\begin{table}[t!]
    \renewcommand{\arraystretch}{1.2} 
    \centering  \scriptsize
    \begin{tabular}{|c|c|r|r|r|r|r|}
    \hline
        flood threshold & mean $\twCRPS_\tau$ for EPS A & mean $\twCRPS_\tau$ for EPS B & p-value  \\ \hline
        minor & $3.291$ & $3.736$ & $0.031$ \\ 
        moderate & $0.624$ & $0.820$ & $0.020$  \\ 
        major & $0.001$ & $0.009$ & $0.008$ \\ \hline
    \end{tabular}
    \caption{Mean $\twCRPS_\tau$ (hours), where $\tau=168$, for EPS~A and EPS~B first passage time forecasts at minor, moderate and major flood thresholds, for issuances spanning 9~May 2024 to 31~July 2025. Lower scores indicate better performance. Each p-value is for the test of equal predictive performance versus EPS~A superiority.}
     \label{tab:twcrps hydro results}
\end{table}

Quantile and IQI forecasts can also be compared, with right-censored quantiles computed empirically from ensemble values. For example, IQI first passage time forecasts are scored using $\twIS_{0.5,168}$ (\ref{eq:twIS2}).  For the minor threshold, mean scores are $3.771$ hours for EPS~A and $4.014$ hours for EPS~B; the difference is not significant (p-value $0.171$).

\subsection{Diagnosing conditional miscalibration and recalibrating time-to-event quantile forecasts}\label{ss:calibration}

This example illustrates the use of isotonic regression as a diagnostic for conditional quantile miscalibration \citep{gneiting2023regression, gneiting2023model} in time-to-event forecasts and as a tool for forecast recalibration. Unlike the settings considered by \citet{gneiting2023regression}, both forecasts and realizations are right-censored.

Consider a forecast service designed for small recreational boats and windsurfers in Botany Bay, Sydney, Australia. Wind speeds above 15 knots ($7.7\,\mathrm{ms}^{-1}$) are unsafe for small boats, whereas windsurfers typically require winds of at least this strength. The service provides forecasts of the first time that wind speed exceeds 15 knots during an 18‑hour validity period (18:00-12:00 UTC), which spans daylight and twilight conditions. Forecasts are issued daily at 06:00 UTC --- 12~hours before the validity period --- and expressed as an interquartile interval (IQI). Verification is performed using observations from the Kurnell weather station on Botany Bay.

We use hourly wind speed forecasts from the high-resolution ACCESS model for the Sydney domain \citep{rennie2022access}, initialized daily at 00:00 UTC with a 36-hour horizon, and archived by the Jive verification system \citep{loveday2024jive}. Figure~\ref{fig:wind timeseries} shows forecasts and observations for the validity period 3~January~2023 18:00 UTC to 4~January~2023 12:00 UTC. The realized time-to-event is 5.35 hours after the start of the validity period; the forecast time-to-event is 15.35 hours, obtained via linear interpolation.

\begin{figure}[t!]
\centering
\includegraphics[width=0.45\textwidth]{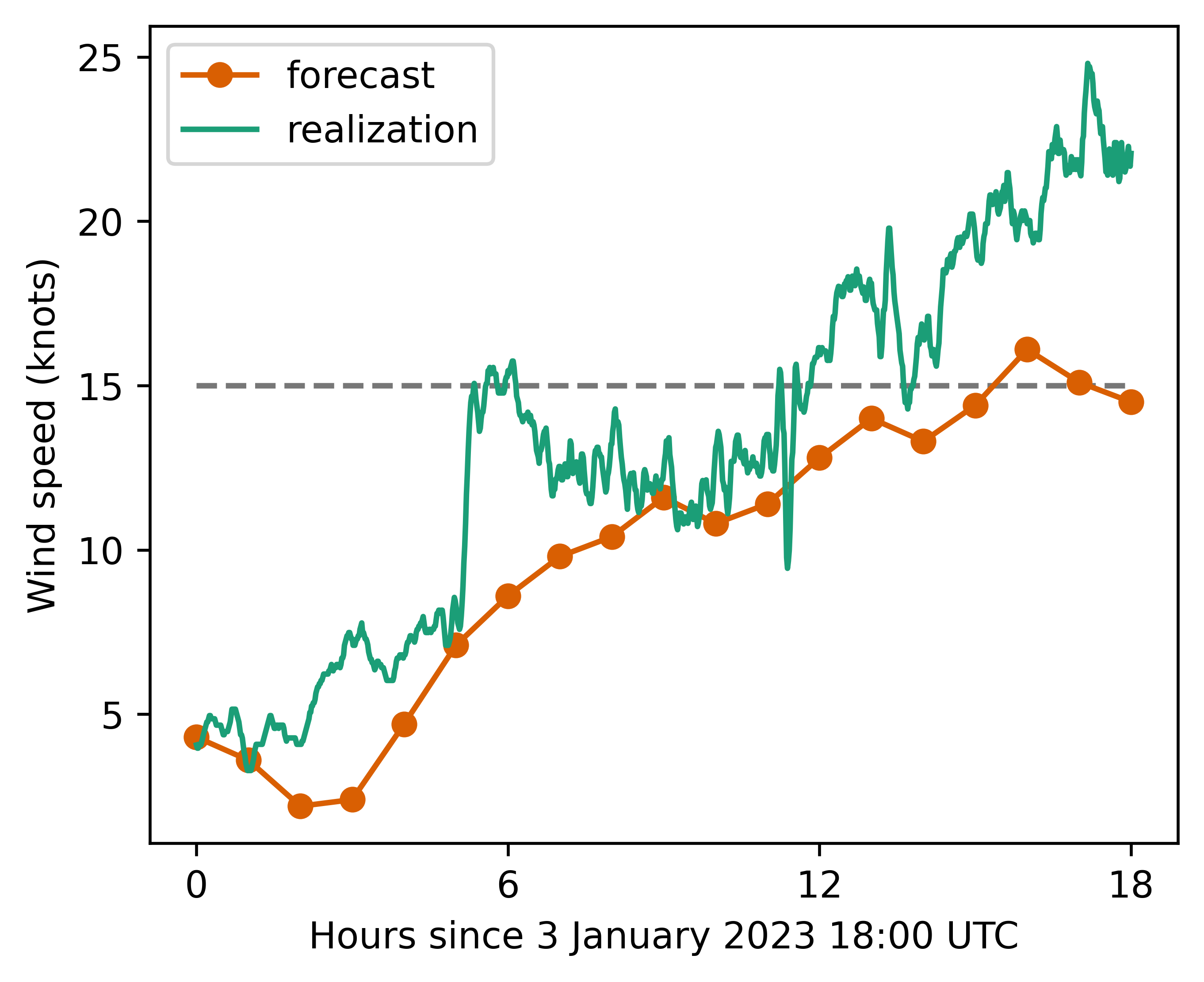}
\caption{Wind speed forecasts for Botany Bay from the ACCESS model (initialization 3~January~2023 00:00 UTC) and corresponding observations.}
\label{fig:wind timeseries}
\end{figure}

As an initial assessment, we derive single-valued time-to-event forecasts $x$ via linear interpolation from hourly ACCESS forecasts and interpret them as deterministic distributions, forming IQI forecasts $(x,x)$. Each validity period is 18 hours, so $\tau=18$, and scoring using $\twIS_{0.5,18}$ (\ref{eq:twIS2}) reduces to the absolute error between the right-censored forecast and realization. Using 2023 data, the mean score $\overline{\twIS}_{0.5,18}$ is 6.56 hours --- over one-third of the validity duration --- indicating substantial room for improvement.

\begin{figure}[t!]
\centering
\includegraphics[width=0.45\textwidth]{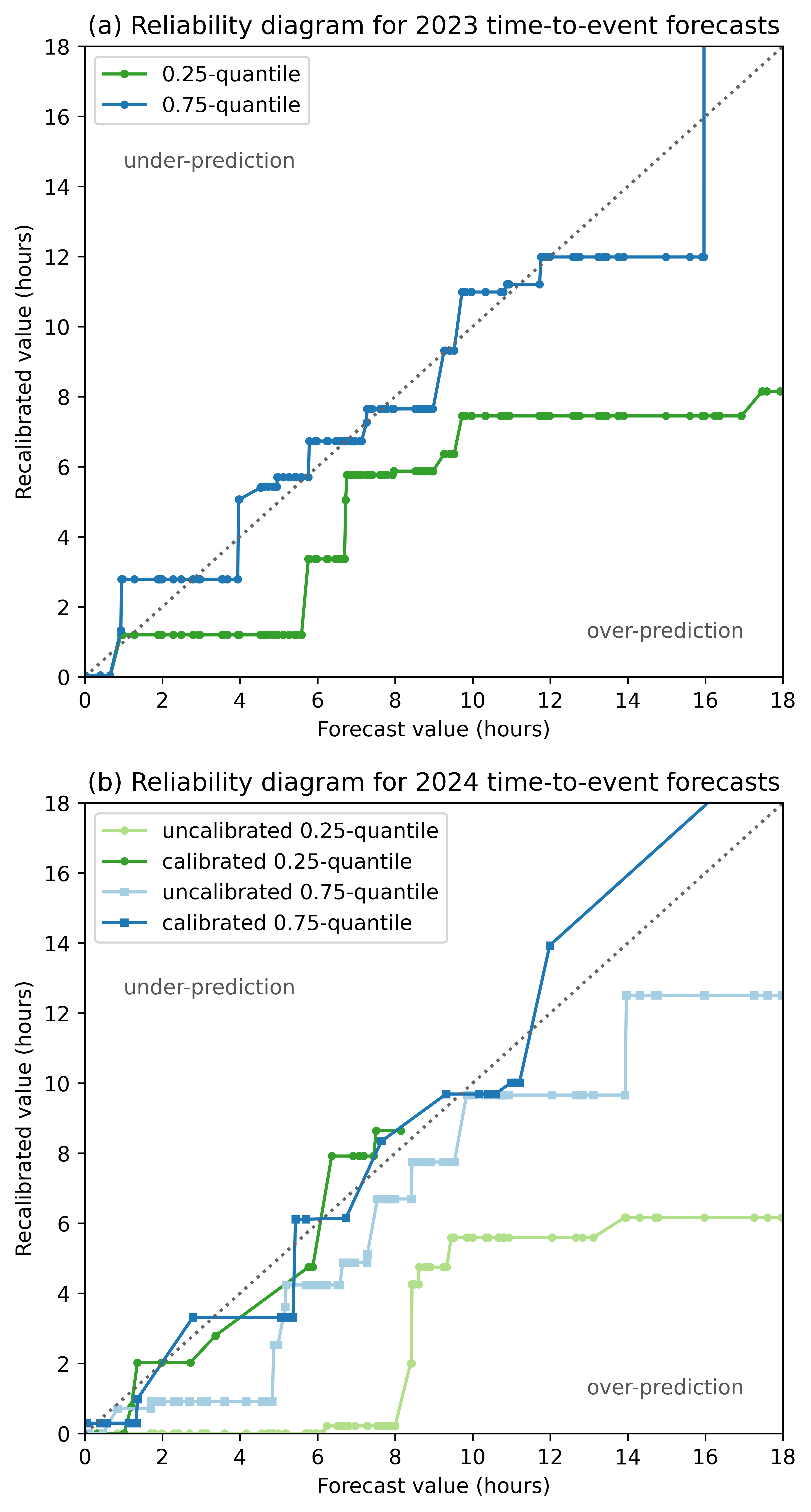}
\caption{Reliability curves from isotonic regression using Botany Bay data: (a) 2023 time-to-event forecasts derived from bias-corrected wind speed forecasts, (b) 2024 uncalibrated and calibrated time-to-event quantile forecasts.}
\label{fig:wind reliability}
\end{figure}

The aim is to produce improved IQI forecasts for the time at which wind speed first exceeds 15 knots, using a three-step process with 2023 data for training and 2024 data for evaluation.
\begin{enumerate}
\item Conditional-biases for hourly ACCESS wind speed forecasts in 2023 are calculated\footnote{Conditional biases are calculated using isotonic regression with squared loss as the loss function.}. For example, a forecast wind speed of 12 knots has an average underforecast bias of 3.1 knots. Apply these bias corrections to each hourly forecast. Then, for each daily 18-hour validity period (18:00--12:00 UTC) in 2023, the hours since 18:00 UTC that the bias-corrected wind speed forecast first exceeds 15 knots is computed via linear interpolation. If the threshold is not exceeded, assign a dummy value greater than 18 to the time-to-event forecast; we use 1000. As discussed later, the choice of dummy value is immaterial.
\item Regress the time-to-event forecasts from Step~1 against the 2023 realizations (using the dummy value 1000 when the event does not occur within the 18-hour window) via isotonic regression, minimizing the quantile losses $\QL_{0.25}$ and $\QL_{0.75}$ (\ref{eq:check loss}) under a monotonicity constraint. This yields two reliability curves (Figure~\ref{fig:wind reliability}a) for the 0.25 and 0.75-quantiles. For example, a time-to-event forecast of 8 hours derived from a bias-corrected hourly wind speed predictions converts to an IQI of $(5.875, 7.65)$ hours. The jump in the 0.75-quantile curve near 16 hours reflects forecasts assigned the dummy value, indicating that the optimal upper quantile lies beyond the 18-hour validity period.
\item Apply the same corrections to the 2024 forecasts: (i) bias-correct the ACCESS wind speed forecasts using the 2023 bias adjustments; (ii) derive each single-valued time-to-event forecast $x$ via linear interpolation; and (iii) convert each $x$ to an IQI $(x_1, x_2)$ using the reliability curves in Figure~\ref{fig:wind reliability}a.
\end{enumerate}

For 2024, the mean threshold-weighted interval score $\overline{\twIS}_{0.5,18}$ drops from 6.02 hours for uncalibrated forecasts to 2.66 hours for calibrated IQI predictions. Reliability curves for uncalibrated and calibrated quantile forecasts appear in Figure~\ref{fig:wind reliability}b; calibrated curves lie close to the diagonal, indicating good reliability. This improvement is feasible because sufficient events occurred in the 2023 training dataset: 73\% materialized, and bias-corrected wind speed forecasts predicted events in 54\% of cases. Rarer events would require larger training datasets.

We do not claim this recalibration method as best practice, but quantile isotonic regression has features well-suited to right-censored data. Time-to-event quantile forecasts for finite validity periods $\tau$ can be evaluated using threshold-weighted quantile scores, which depend only on right-censored forecasts and realizations, making the choice of dummy value irrelevant given it exceeds $\tau$. Moreover, \citet{jordan2022characterizing} show that the min--max solution to isotonic regression for quantiles is optimal for any consistent scoring function for quantiles, including threshold-weighted scores. This extends naturally to interval forecasts, assessed via the threshold-weighted interval score (a sum of two quantile scores). Consequently, the circle markers in Figure~\ref{fig:wind reliability} within the region $[0,\tau)\times[0,\tau)$ are unaffected by the choice of  dummy value. Choosing a dummy value greater than $\tau$ rather than equal to $\tau$ clarifies whether a quantile exceeds the validity period, aiding interpretation. For example, if the recalibrated 0.75-quantile exceeds $\tau$, one can state: ``the 75th percentile for the event time lies beyond the forecast validity period.''

\section{Practical implications}\label{s:discussion}

This paper has presented a framework for evaluating time-to-event forecasts when some forecasts, observations, or both are available only through their right-censored values relative to a common fixed censoring time $\tau$. The setting is particularly relevant for forecasting systems with finite prediction horizons or validity periods, where $\tau$ represents the forecast horizon or validity period for each forecast case. It does not address the random censoring mechanisms that dominate much of the biomedical survival-analysis literature.

For probabilistic forecasts, recent developments in scoring-rule theory provide theoretically sound evaluation methods. In particular, the localization framework of \citet{de2025localizing} implies that threshold-weighted variants of familiar scores, including the threshold-weighted CRPS (Equations~\ref{eq:twcrps specific} and \ref{eq:twcrps ensemble}) and threshold-weighted logarithmic score (Equation~\ref{eq:twlogs special case}), admit coherent evaluation under fixed-horizon right-censoring. These methods remain valid when forecasts themselves are right-censored to the prediction horizon. The threshold-weighted CRPS is particularly attractive for ensemble prediction systems in meteorology and hydrology because it can be computed directly from ensemble-derived empirical distributions without requiring the specification of a predictive density.

For single-valued and interval-valued forecasts, the choice of forecast summary is important. Quantiles and prediction intervals whose endpoints are quantiles can be evaluated in a theoretically sound manner under fixed-horizon right-censoring using the threshold-weighted quantile loss (Equation~\ref{eq:twCL}) and the threshold-weighted interval score (Equation~\ref{eq:twIS2}). In contrast, the mean (or expectation) functional does not admit a corresponding theoretically justified scoring method. As a result, mean-valued time-to-event forecasts, including those based on ensemble means, cannot be evaluated in the same principled manner as quantiles and prediction intervals when right-censoring is present.

Beyond forecast ranking, several familiar diagnostic tools remain applicable under fixed-horizon right-censoring. Threshold-weighted CRPS and quantile scores retain useful diagnostic representations through Brier-score decompositions and Murphy diagrams, respectively (Figure~\ref{fig:murphy synthetic}). Reliability diagrams based on quantile isotonic regression (Figure~\ref{fig:wind reliability}) can be constructed when both quantile forecasts and corresponding realizations are represented by their right-censored values, providing practical tools for diagnosing conditional biases and recalibrating forecasts.

The examples presented in this paper demonstrate that these methods can be implemented in operational forecasting environments. The open-source Python package \texttt{scores} \citep{leeuwenburg2025scores, leeuwenburg2024scores} contains implementations of the scoring, diagnostic and recalibration methods discussed in this paper, with the exception of the threshold-weighted logarithmic score. This substantially lowers the barrier to applying these methods in operational forecast verification workflows.

The key practical message is therefore straightforward: when time-to-event forecasts are subject to fixed-horizon right-censoring, predictive distributions, quantiles, and prediction intervals can be evaluated using coherent scoring and diagnostic tools, whereas the evaluation of expected time-to-event forecasts remains fundamentally problematic under fixed-horizon right-censoring.

\vspace{10pt}

\noindent\textbf{Acknowledgments.} The authors would like to thank Tilmann Gneiting, Deryn Griffith, Richard Laugesen and Jason West for giving feedback on an earlier version of this paper, and Chris Leahy for making data available for the Hawkesbury--Nepean Valley case study.

\vspace{10pt}

\noindent\textbf{Declaration of generative AI and AI-assisted technologies in the manuscript preparation process.} Copilot was employed in the final stages of this work to refine writing style and improve brevity. After using Copilot, the authors reviewed and edited the content as needed and take full responsibility for the content of the article.

\vspace{10pt}

\noindent\textbf{Data and code availability.} A repository of data and code for the case studies of this paper
has been made available \citep{taggart2026code}, and uses the open source python \texttt{scores} package \citep{leeuwenburg2025scores, leeuwenburg2024scores}.

\appendix

\section{Proofs of results}\label{a:proofs}

Theorem~\ref{thm:provisional from strictly proper 1} follows from Theorem~2 of \citet{de2025localizing}. However, the direct proof given below is shorter, avoids introducing the full localization framework, and provides clear intuition for the result.

\begin{proof}[Proof of Theorem~\ref{thm:provisional from strictly proper 1}]
Suppose that $\mathcal{F}$, $\mathcal{F}_*$ and $S$ satisfy the hypotheses of Theorem~\ref{thm:provisional from strictly proper 1}. Given any $F$ and $G$ in $\mathcal{F}$, the expectation $\EEE_{[F]_\tau} S([G]_\tau, T)$ exists, since $[F]_\tau, [G]_\tau \in\mathcal{F}_*$ and $S$ is strictly proper relative to $\mathcal{F}_*$. Moreover, since $[T]_\tau$ (when $T\sim F$) and $T$ (when $T\sim [F]_\tau$) have the same distribution, a simple calculation shows that
\begin{align*}
\EEE_{[F]_\tau} S([G]_\tau, T)
&= \EEE_F S([G]_\tau, [T]_\tau) \\
&= \EEE_F S_\tau(G, [T]_\tau),
\end{align*}
where the last equality follows from the definition of $S_\tau$. Consequently, for any $F$ and $G$ in $\mathcal{F}$,
\begin{align*}
\EEE_F S_\tau(F,[T]_\tau)
&= \EEE_{[F]_\tau} S([F]_\tau, T) \\
&\leq \EEE_{[F]_\tau} S([G]_\tau, T) \\
&= \EEE_{F} S_\tau(G, [T]_\tau)
\end{align*}
by the strict propriety of $S$ relative to $\mathcal{F}_*$, with equality if and only if $[F]_\tau=[G]_\tau$. Hence $S_\tau$ is strictly locally proper relative to $\mathcal{F}$.
\end{proof}

Corollary~\ref{cor:twCRPS} likewise follows as a special case of the framework of \citet{de2025localizing}. Nonetheless, we give a direct proof of part~(i) and a proof of part~(ii) in the special case $w(s)=\one\{0\leq s < \tau\}$. These proofs provide insight into how Theorem~\ref{thm:provisional from strictly proper 1} may be applied to both a distance-sensitive scoring rule ($\CRPS$) and a semi-local scoring rule ($\LogS$). The extension of part~(ii) to general weight functions requires more substantial measure-theoretic machinery, and we therefore refer interested readers to \citet{de2025localizing}.

\begin{proof}[Proof of Corollary~\ref{cor:twCRPS}]
Suppose that $w:I\to[0,1]$ is positive on $[0,\tau)$ and zero elsewhere. 

To prove part (i), define $w_1$ by
\[
w_1(s)=
\begin{cases}
w(s) & \text{if } 0 \leq s \leq \tau \\
w(\tau) & \text{if } s > \tau.
\end{cases}
\]
Then the chaining function $v_1$, defined by $v_1(x)=\int_{[0,x)}w_1(s)\,\dd s$, is injective, whereby the scoring function $S=\twCRPS(\,\cdot\,,\,\cdot\,;w_1)$ is strictly proper relative to the class of distributions on $I$ with finite first moment \citep[Proposition~3]{allen2023evaluating}. We now apply Theorem~\ref{thm:provisional from strictly proper 1} to the strictly proper scoring rule $S$ to obtain the strictly locally proper scoring rule $S_\tau$ relative to the class of distributions on $I$, noting that  $S_\tau=\twCRPS(\,\cdot\,,\,\cdot\,;w)$.

We turn now to part (ii) for the case $w(s)=\one\{0\leq s < \tau\}$. The key insight is to apply the logarithmic score to densities with respect to a measure that is compatible with right-censoring. Let $\mathcal{F}$ denote the class of distributions $F$ possessing a Lebesgue density $f$. Define the measure
\[
\nu := \lambda\vert_{[0,\tau)} + \delta_\tau,
\]
where  $\lambda\vert_{[0,\tau)}$ denotes the restriction of Lebesgue measure $\lambda$ to $[0,\tau)$ and $\delta_\tau$ denotes the Dirac measure at $\tau$. The measure $\nu$ is $\sigma$-finite and every censored distribution $[F]_\tau$, where $F\in\mathcal{F}$, is absolutely continuous with respect to $\nu$. Hence the class $\mathcal{F}_*:=\{[F]_\tau: F\in\mathcal{F}\}$ is contained in the class $\mathcal{L}_\nu$ of probability distributions that are absolutely continuous with respect to $\nu$.

Now let $S$ define the logarithmic score relative to $\nu$-densities, so that
\[
S(P, t) = -\ln \frac{\dd P}{\dd \nu}(t)
\]
for $P$ in $\mathcal{L}_\nu$. By Section~4.1 of \citet{gneiting2007strictly}, $S$ is strictly proper relative to $\mathcal{L}_\nu$. Therefore Theorem~\ref{thm:provisional from strictly proper 1} implies that the score $S_\tau$, given by $S_\tau(F,t)=S([F]_\tau,t)$, is strictly locally proper relative to $\mathcal{F}$ on $[0,\tau)$.

It remains to show that $S_\tau=\twLogS_\tau$. Now if $F\in\mathcal{F}$ then
\[
\frac{\dd[F]_\tau}{\dd \nu}(s) = \begin{cases}f(s), & 0\leq s < \tau, \\ 1 - F(\tau), & s=\tau, \end{cases}
\]
$\nu$-almost everywhere (c.f. \citealt[Section~B.1]{de2025localizing}). Substituting this into the formula for $S_\tau$ yields precisely $\twLogS_\tau$.
\end{proof}

We now prove the results about local elicitability, starting with two lemmas.

\begin{lem}\label{lem:censor}
Suppose that $F$ is a probability distribution on $I$, $S$ is a scoring function and $\tau$ belongs to the interior of $I$. Then
\begin{equation}\label{eq:censor lemma}
\EEE_F S(x, [T]_\tau) = \EEE_{[F]_\tau}S(x,T) 
\end{equation}
for all $x$ in $I$.
\end{lem}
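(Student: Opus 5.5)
The plan is to show that, for $T\sim F$, the random variable $[T]_\tau$ has exactly the distribution $[F]_\tau$. Once this is established, (\ref{eq:censor lemma}) follows from the change-of-variables (law of the unconscious statistician) formula. Fix $x$ in $I$ and write $h(s)=S(x,s)$, which is a Borel function on $I$ by the standing measurability assumption. Then $\EEE_F S(x,[T]_\tau)=\EEE_F h(\min(T,\tau))=\int_I h\,\dd\mu$, where $\mu$ is the pushforward of $F$ under the map $s\mapsto\min(s,\tau)$. The right-hand side is $\int_I h\,\dd[F]_\tau$. It therefore suffices to prove $\mu=[F]_\tau$ as Borel measures on $I$.

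To identify $\mu$, I will compare cumulative distribution functions, since a Borel probability measure on $I$ is determined by its CDF. For $s<\tau$ we have $\{\min(T,\tau)\leq s\}=\{T\leq s\}$, because $\min(T,\tau)\leq s<\tau$ forces $T\leq s$, and conversely $T\leq s$ implies $\min(T,\tau)\leq s$. Hence $\mu([0,s])=F(s)$. For $s\geq\tau$ we have $\min(T,\tau)\leq\tau\leq s$ always, so $\mu([0,s])=1$. These values agree with the definition of $[F]_\tau$. Two remarks make this comparison legitimate. First, $\tau$ lies in the interior of $I$, so $\min(T,\tau)$ takes values in $I$. Second, $[F]_\tau$ is a genuine CDF: it is nondecreasing, right-continuous (at $\tau$ it jumps to $1$, which carries mass $1-F(\tau^-)$ at $\tau$), and it reaches $1$.

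The only step needing care is existence and finiteness of the expectations, since $S$ may be unbounded or take signed values. I will handle this by noting that the change-of-variables identity holds in the extended sense. It holds first for nonnegative $h$ and then separately for $h^+$ and $h^-$. Consequently, one side exists (possibly infinite) if and only if the other does, with equal values, and the lemma holds under the paper's tacit convention that the expectations exist. I expect no real obstacle here; the main point is simply to state the pushforward identification cleanly.
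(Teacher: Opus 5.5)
Your proposal is correct and is essentially the paper's argument. Both rest on the fact that $[T]_\tau$ with $T\sim F$ has law $[F]_\tau$; the paper writes out both sides explicitly as an integral over $[0,\tau)$ plus a point-mass term at $\tau$, rather than identifying the pushforward through its CDF and invoking change of variables.

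Your version is in fact slightly more careful at the atom. The mass of $[F]_\tau$ at $\tau$ is $1-\lim_{y\uparrow\tau}F(y)$, as you say. The paper's displayed expression instead uses $1-F(\tau)$, which is right only when $F$ puts no mass at $\tau$.
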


\begin{proof}
The definition of the expectation, right-censored distributions and right-censored random variables imply that both sides of Equation~(\ref{eq:censor lemma}) equal $\int_{[0,\tau)}S(x,t)\,\dd F(t) + S(x,\tau)(1-F(\tau))$.
\end{proof}

Suppose that $0<\alpha<1$. Recall that a number $x$ is an $\alpha$-quantile of a distribution $F$ if and only if $\lim_{y\uparrow x} F(y) \leq \alpha \leq F(x)$.

\begin{lem}\label{lem:quantile censoring}
Let $U$ denote the $\alpha$-quantile functional for some class $\mathcal{F}$ of probability distributions on $I$. Suppose that $\tau$ belongs to the interior of $I$ and $F\in\mathcal{F}$.
\begin{enumerate}
\item[(a)] If $x\in U(F)$ then $[x]_\tau \in U([F]_\tau)$.
\item[(b)] $U([F]_\tau)$ is a subset of $[U(F)]_\tau$.
\end{enumerate}
\end{lem}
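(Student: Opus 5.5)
We work directly from the characterization just recalled: $x$ is an $\alpha$-quantile of $F$ if and only if $F(x^-)\le\alpha\le F(x)$, where $F(x^-)=\lim_{y\uparrow x}F(y)$. We also use the definition of $[F]_\tau$: it agrees with $F$ on $[0,\tau)$ and equals $1$ on $[\tau,\infty)$. In particular $[F]_\tau(s^-)=F(s^-)$ for every $s\le\tau$, since the left limit only involves values of $F$ below $s\le\tau$. Both parts reduce to comparing $x$ with $\tau$ and checking the two defining inequalities.

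For part (a), take $x\in U(F)$ and split into two cases.
\begin{itemize}
\item If $x<\tau$, then $[x]_\tau=x$, and $[F]_\tau(x)=F(x)$ and $[F]_\tau(x^-)=F(x^-)$. The quantile inequalities transfer verbatim.
\item If $x\ge\tau$, then $[x]_\tau=\tau$. The upper inequality $\alpha\le[F]_\tau(\tau)=1$ is automatic. For the lower inequality, $[F]_\tau(\tau^-)=F(\tau^-)\le F(x^-)\le\alpha$, by monotonicity of $F$ and $\tau\le x$.
\end{itemize}
In both cases $[x]_\tau\in U([F]_\tau)$.

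For part (b), take $y\in U([F]_\tau)$. We must exhibit $x\in U(F)$ with $[x]_\tau=y$. First note that $y\le\tau$, because $[F]_\tau(s)=1$ for $s\ge\tau$ and hence $[F]_\tau(s^-)=1>\alpha$ for $s>\tau$. There are again two cases.
\begin{itemize}
\item If $y<\tau$, the left limit and value of $[F]_\tau$ at $y$ coincide with those of $F$, so $y\in U(F)$ and we take $x=y$.
\item If $y=\tau$, we know only $F(\tau^-)\le\alpha$, and need some $\alpha$-quantile $x$ of $F$ with $x\ge\tau$, so that $[x]_\tau=\tau$. Take $x=\inf\{s\in I: F(s)\ge\alpha\}$, the lower $\alpha$-quantile, which belongs to $U(F)$ by right-continuity.
\end{itemize}

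The main obstacle is the case $y=\tau$ in part (b), because the lower $\alpha$-quantile may lie strictly below $\tau$ when $F(\tau^-)=\alpha$ exactly. In that situation we instead take the upper quantile $x^+=\sup\{s: F(s^-)\le\alpha\}$, which also belongs to $U(F)$. The condition $F(\tau^-)\le\alpha$ gives $x^+\ge\tau$, because $\tau$ lies in the set defining the supremum. Hence $[x^+]_\tau=\tau$.

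A second subtlety is that $x^+$ might be infinite, namely when $F(s)\le\alpha$ for all $s$. This cannot happen for a probability distribution on $I$ with $\alpha<1$ and $I=[0,\infty)$. For bounded $I$, $x^+$ lies in $I$ since $F(b)=1$. Verifying $x^+\in U(F)$ is a routine use of monotonicity together with left and right limits.
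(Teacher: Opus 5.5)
Your proof is correct and follows the paper's own case analysis: you compare $[x]_\tau$ (or $y$) with $\tau$ and check the two defining quantile inequalities. The only difference is in the case $y=\tau$ of part (b), where you explicitly take the upper $\alpha$-quantile $\sup\{s\in I: F(s^-)\le\alpha\}\ge\tau$; this makes precise the paper's terse step that, since $\tau$ is a $\beta$-quantile with $\beta=F(\tau^-)\le\alpha$, there must be an $\alpha$-quantile $q\ge\tau$. (You could skip the lower-quantile detour and use the upper quantile directly, and for $I=[0,b)$ its finiteness follows from $F(s)\to1$ as $s\uparrow b$ rather than from $F(b)=1$.)
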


\begin{proof}
To prove (a), suppose that $x\in U(F)$. If $[x]_\tau = x < \tau$ then
\[\lim_{y\uparrow [x]_\tau} [F]_\tau(y) = \lim_{y\uparrow x} F(y) \leq \alpha \leq F(x) = [F]_\tau([x]_\tau).\]
On the other hand, if $[x]_\tau = \tau$ then
\[\lim_{y\uparrow [x]_\tau} [F]_\tau(y) = \lim_{y\uparrow \tau} F(y) \leq \lim_{y\uparrow x} F(y) \leq \alpha \leq 1 =  [F]_\tau([x]_\tau).\]
Either way, $[x]_\tau \in U([F]_\tau)$.

To prove (b), suppose that $x\in U([F]_\tau)$. Then necessarily $x\leq \tau$ and for any $0\leq y<\tau$ we have $[F]_\tau(y)=F(y)$. Hence
\[ \lim_{y\uparrow x}F(y) = \lim_{y\uparrow x}[F]_\tau(y)\leq \alpha.\]
Now if $x<\tau$ we also have $\alpha \leq [F]_\tau(x)=F(x)$ and hence $x$ is an $\alpha$-quantile of $F$, which combined with $x<\tau$ implies that $x\in[U(F)]_\tau$. If, on the other hand, $x=\tau$ then write $\beta=\lim_{y\uparrow \tau}F(y)$. It is clear that $\beta \leq \alpha$ and that $\tau$ is a $\beta$-quantile of $F$. Hence there must be an $\alpha$-quantile $q$ of $F$ such that $q\geq\tau$. Now $[q]_\tau= x$ whence $x\in[U(F)]_\tau$.
\end{proof}

\begin{proof}[Proof of Theorem~\ref{thm:quantile is provisionally elicitable}]
To prove (a), suppose that $\tau$ belongs to the interior of $I$ and define the scoring function $S: I \times I\to\RRR$ by Equation~(\ref{eq:consistent provisional quantile sf}), where $g:I\to\RRR$ is strictly increasing on $[0,\tau]$. Suppose that $F\in\mathcal{F}$, $U$ denotes the $\alpha$-quantile functional on $\mathcal{F}$, $u\in U(F)$ and $x\in I$. By Lemma~\ref{lem:quantile censoring}(a), $[u]_\tau$ is an $\alpha$-quantile of $[F]_\tau$. Hence
\begin{align*}
\EEE_F S(u,[T]_\tau)
&= \EEE_{[F]_\tau} S(u,T) \\
&= \EEE_{[F]_\tau} S([u]_\tau,T) \\
&\leq  \EEE_{[F]_\tau} S([x]_\tau,T) \\
&= \EEE_{[F]_\tau} S(x,T) \\
&= \EEE_F S(x,[T]_\tau),
\end{align*}
where the first and last equalities are justified by Lemma~\ref{lem:censor}, the second and third equalities are justified by the definition of $S$, and the inequality is justified by the fact that $S$, restricted to the domain $[0,\tau]\times[0,\tau]$, is a strictly consistent scoring function for the class $\mathcal{F}_0$ of probability measures supported on the compact interval $[0,\tau]$ \citep{gneiting2011quantiles}. This establishes (\ref{eq:censored consistency}). Moreover, if equality holds in the above calculation, then the strict consistency of $S$ when restricted to the compact interval $[0,\tau]$ implies that $[x]_\tau$ is an $\alpha$-quantile of $[F]_\tau$, which implies by Lemma~\ref{lem:quantile censoring}(b) that $[x]_\tau\in [U(F)]_\tau$. Hence $S$ is strictly locally consistent. Part (b) of the theorem follows from part (a) and the fact that the $\alpha$-quantile functional is elicitable with respect to $\mathcal{F}$ \citep[Theorem~9]{gneiting2011making}.
\end{proof}

\begin{lem}\label{lem:vector valued functional}
Suppose that, whenever $1\leq i\leq k$, the scoring function $S_i:I\times I\to\RRR$ is strictly locally consistent for the functional $U_i:\mathcal{F}\to\mathcal{P}(I)$ relative to $\mathcal{F}$ on $[0,\tau)$, and that $\lambda_i>0$. Let $U:\mathcal{F}\to\mathcal{P}(I^k)$ denote the functional given by $U=(U_1,\ldots,U_k)$. Then $U$ is locally elicitable relative to $\mathcal{F}$ on $[0,\tau)$ and the scoring function $S:I^k\times I \to \RRR$, given by
\[S((x_1, \ldots, x_k), t) = \sum_{i=1}^k \lambda_i S_i(x_i,t),\]
is strictly locally consistent for the functional $U$ relative to $\mathcal{F}$ on $[0,\tau)$.
\end{lem}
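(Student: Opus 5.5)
The plan is to verify local consistency and strict local consistency directly from Definition~\ref{def:provisionally strictly consistent}, working componentwise. Fix $F\in\mathcal{F}$, a vector $u=(u_1,\ldots,u_k)\in U(F)$ and an arbitrary $x=(x_1,\ldots,x_k)\in I^k$. Since $U=(U_1,\ldots,U_k)$, membership $u\in U(F)$ means $u_i\in U_i(F)$ for each $i$. I will interpret $U(F)$ as the Cartesian product $U_1(F)\times\cdots\times U_k(F)$, so that $[U(F)]_\tau=[U_1(F)]_\tau\times\cdots\times[U_k(F)]_\tau$; this identity holds because censoring acts coordinatewise.

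First, linearity of expectation gives
\[
\EEE_F S(x,[T]_\tau)=\sum_{i=1}^k\lambda_i\,\EEE_F S_i(x_i,[T]_\tau),
\]
and the same decomposition holds with $u$ in place of $x$. This requires each expectation to exist; I take that to be tacit in the hypotheses, as elsewhere in the paper. Local consistency of each $S_i$ gives $\EEE_F S_i(u_i,[T]_\tau)\leq \EEE_F S_i(x_i,[T]_\tau)$. Multiplying by $\lambda_i>0$ and summing then yields (\ref{eq:censored consistency}) for $S$.

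Next comes strictness. Suppose equality holds in the summed inequality. Every term of the difference
\[
\sum_i\lambda_i\big(\EEE_F S_i(x_i,[T]_\tau)-\EEE_F S_i(u_i,[T]_\tau)\big)
\]
is nonnegative, and each $\lambda_i$ is strictly positive. Hence every term vanishes, giving $\EEE_F S_i(x_i,[T]_\tau)=\EEE_F S_i(u_i,[T]_\tau)$ for each $i$. Strict local consistency of $S_i$ then gives $[x_i]_\tau\in[U_i(F)]_\tau$ for every $i$. By the product identity, $[x]_\tau\in[U(F)]_\tau$. This proves that $S$ is strictly locally consistent, and local elicitability of $U$ follows immediately from Definition~\ref{def:provisionally elicitable}.

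The only delicate point is the set-theoretic one: identifying $[U(F)]_\tau$ with the product of the $[U_i(F)]_\tau$. That identification needs $U(F)$ to be the full product of the component values. I will state this interpretation explicitly as the meaning of $U=(U_1,\ldots,U_k)$. Finiteness of the expectations, which is needed to subtract them, I will handle by the paper's standing tacit-existence convention.
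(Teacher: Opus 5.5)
Your proposal is correct and follows essentially the same argument as the paper: linearity of expectation with positive weights $\lambda_i$ gives the summed inequality, and equality forces equality in each component, whence $[x_i]_\tau\in[U_i(F)]_\tau$ for every $i$. You also make explicit two points that the paper's proof uses without comment: that $U(F)=U_1(F)\times\cdots\times U_k(F)$, so coordinatewise censoring gives $[U(F)]_\tau=[U_1(F)]_\tau\times\cdots\times[U_k(F)]_\tau$, and that the expectations are tacitly finite.
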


\begin{proof}
Assume the hypotheses of the lemma. Suppose that $u_i\in U_i$ and $x_i\in I$ whenever $1\leq i\leq k$. Then
\begin{align*}
\EEE_F S((u_1, \ldots, u_k), [T]_\tau)
&=  \sum_{i=1}^k \lambda_i \EEE_F S_i(u_i, [T]_\tau) \\
&\leq  \sum_{i=1}^k \lambda_i \EEE_F S_i(x_i, [T]_\tau) \\
&= \EEE_F S((x_1, \ldots, x_k), [T]_\tau)
\end{align*}
by the strict local consistency of each $S_i$ and the linearity of the expectation operator. If equality holds then we must have $\EEE_F S_i(u_i, [T]_\tau) = \EEE_F S_i(x_i, [T]_\tau)$ for each $i$, whence $[x_i]_\tau \in [U_i(F)]_\tau$ for each $i$. Hence $[(x_1,\ldots,x_k)]_\tau \in [U(F)]_\tau$.
\end{proof}

\begin{proof}[Proof of Theorem~\ref{thm:prediction interval is provisionally elicitable}]
Note that $\twIS_{\alpha,\tau}((x_1,x_2), t)=\twQL_{\alpha/2,\tau}(x_1,t)+\twQL_{1-\alpha/2,\tau}(x_2,t)$. The theorem then follows from Theorem~\ref{thm:quantile is provisionally elicitable} and Lemma~\ref{lem:vector valued functional}.
\end{proof}

\begin{proof}[Proof of Theorem~\ref{thm:not prov elic}]
Suppose that properties (a), (b) and (c) of the proposition hold. Assume that $U$ is locally elicitable with respect to $\mathcal{F}$ on $[0,\tau)$. Then, by Definition~\ref{def:provisionally elicitable} and property (a), there exists a scoring function $S: I\times I\to \RRR$ such that, for each $i=1,2$, the set of minimizers of the mapping $x\mapsto \EEE_{F_i}S(x,[T]_\tau)$ is precisely $U(F_i)$. By Lemma~\ref{lem:censor} and property (c),
\begin{align*}
\EEE_{F_1}S(x,[T]_\tau)
&=\EEE_{[F_1]_\tau}S(x,T) \\
&=\EEE_{[F_2]_\tau}S(x,T) \\
&=\EEE_{F_2}S(x,[T]_\tau).
\end{align*}
Hence the set of minimizers of $x\mapsto \EEE_{F_1}S(x,[T]_\tau)$ is both $U(F_1)$ and $U(F_2)$, which contradicts property (b). Therefore $U$ is not locally elicitable.
\end{proof}

\begin{proof}[Proof of Corollary~\ref{cor:expectation not provisionally elicitable}]
Denote by $U$ the expectation functional and suppose that $\tau$ lies in the interior of $I$.
Choose $a$ in the interior of $I$ such that $\tau<a<2\tau$. Let $f_1$ and $f_2$ denote the probability densities given by
\begin{align*}
f_1(s)&=
\begin{cases}
1/a & \text{if } 0\leq s \leq a \\
0 & \text{otherwise},
\end{cases} \\
f_2(s)&=
\begin{cases}
1/a & \text{if } 0\leq s \leq \tau \\
\frac{2}{a}(1 - (s-\tau)/(a-\tau)) & \text{if } \tau < s \leq a \\
0 & \text{otherwise,}
\end{cases}
\end{align*}
and let $F_1$ and $F_2$ denote their respective CDFs. Note that $F_1$ and $F_2$ have finite support, density functions and finite $m$th moment. Now $0 < U(F_2) < U(F_1)=a/2<\tau$, whence $F_1$ and $F_2$ satisfy properties (a), (b) and (c) of Theorem~\ref{thm:not prov elic}.  Hence $U$ is not locally elicitable with respect to $\mathcal{F}$ on $[0,\tau)$.
\end{proof}

\bibliography{time_of_event.bib}
\bibliographystyle{apalike-ejor}

\end{document}